\theoremstyle{plain}
\newtheorem{theorem}{Theorem}
\newtheorem{lemma}{Lemma}
\newtheorem{proposition}[lemma]{Proposition}
\newtheorem{corollary}[lemma]{Corollary}
\newtheorem{fact}[lemma]{Fact}
\newtheorem{example}{Example}[section]
\theoremstyle{definition}
\newtheorem{tab}{Table}
\newtheorem{remark}{Remark}
\newcommand{\fn}{\mathbb{F}_{2^n}}
\newcommand{\f}{\mathbb{F}_{2}}
\newcommand{\F}{\mathbb{F}}
\newcommand{\V}{\mathbb{V}}
\renewcommand{\to}{\rightarrow}
\newcommand{\Tnm}{{\rm Tr^n_m}}
\newcommand{\T}{{\rm Tr}}
\DeclareMathOperator{\Tr}{Tr}
\begin{document}
\title{On functions with the maximal number of bent components}
\author{Nurdag\"{u}l Anbar$^{1}$, Tekg\"{u}l Kalayc\i$^{1}$, Wilfried Meidl$^{2}$, L\'aszl\'o M\'erai$^{2}$
\vspace{0.4cm} \\
\small $^1$Sabanc{\i} University,\\
\small MDBF, Orhanl\i, Tuzla, 34956 \. Istanbul, Turkey\\
\small Email: {\tt nurdagulanbar2@gmail.com}\\
\small Email: {\tt tekgulkalayci@sabanciuniv.edu}\\
\small $^2$Johann Radon Institute for Computational and Applied Mathematics,\\
\small Austrian Academy of Sciences, Altenbergerstrasse 69, 4040-Linz, Austria\\
\small Email: {\tt meidlwilfried@gmail.com} \\
\small Email: {\tt laszlo.merai@oeaw.ac.at}
 }

\maketitle

\begin{abstract}
A function $F:\F_2^n\rightarrow \F_2^n$, $n=2m$, can have at most $2^n-2^m$ bent component functions.
Trivial examples are obtained as {$F(x) = (f_1(x),\ldots,f_m(x),a_1(x),\ldots,\allowbreak a_m(x))$}, where $\tilde{F}(x)=(f_1(x),\ldots,f_m(x))$ is a vectorial bent function from $\F_2^n$ to $\F_2^m$, and $a_i$, $1\le i\le m$, are affine 
Boolean functions. A class of nontrivial examples is given in univariate form with the functions 
$F(x) = x^{2^r}\Tnm(\Lambda(x))$, where $\Lambda$ is a linearized permutation of $\F_{2^m}$.
In the first part of this article it is shown that plateaued functions with $2^n-2^m$ bent components 
can have nonlinearity at most $2^{n-1}-2^{\lfloor\frac{n+m}{2}\rfloor}$, a bound which is attained by 
the example $x^{2^r}\Tnm(x)$, $1\le r<m$ (Pott et al. 2018). This partially solves Question 5 in Pott et al. 2018.
We then analyse the  functions of the form $x^{2^r}\Tnm(\Lambda(x))$. We show that for odd $m$,
only $x^{2^r}\Tnm(x)$, $1\le r<m$, has maximal nonlinearity, whereas there are more of them for 
even $m$, of which we present one more infinite class explicitly. In detail, we investigate Walsh spectrum, 
differential spectrum and their relations for the functions $x^{2^r}\Tnm(\Lambda(x))$. Our results indicate 
that this class contains many nontrivial EA-equivalence classes of functions with the maximal number of bent 
components, if $m$ is even, several with maximal possible nonlinearity.
\end{abstract}

\section{Introduction}

For a Boolean function $f$ from an $n$-dimensional vector space $\V_n$ over the prime field $\F_2$ 
into $\F_2$, the {\it Walsh transform} of $f$ is the integer valued function
 \[ \mathcal{W}_f(u) := \sum_{x\in\V_n}(-1)^{f(x)+\langle u, x\rangle_n}, \]
where $\langle , \rangle_n$ denotes an inner product on $\V_n$. The Boolean function $f$ is called a 
{\it bent function} if $\mathcal{W}_f(u)\in \{\pm 2^{n/2}\}$ for all $u\in \V_n$. Clearly, $n$ must then be even.
If $\mathcal{W}_f(u)\in \{\pm 2^{\frac{n+s}{2}},0\}$ for all $u\in\V_n$, where $s$ is some fixed integer with 
$s\equiv n\bmod 2$, then $f$ is called {\it plateaued}, or more precisely, {\it $s$-plateaued}.
In particular, a bent function is a $0$-plateaued function, $1$-plateaued functions ($n$ odd),
and $2$-plateaued functions ($n$ even) are also called {\it semibent}.

For a {\it vectorial function} $F$ from $\V_n$ to $\V_m$, the {\it component function} $F_\alpha$, 
$\alpha \in \V_m\setminus \{0\}:=\V_m^*$,
is the Boolean function $F_\alpha(x) = \langle \alpha, F(x)\rangle_m$. The Walsh transform of $F$ at $u\in \V_n$
and {$v\in\V_m^*$} is then defined via the component functions as
\[ \mathcal{W}_F(u,v) := \sum_{x\in\V_n}(-1)^{\langle v,F(x)\rangle_m+\langle u, x\rangle_n}. \]

A vectorial function $F:\V_n\rightarrow\V_m$ is called {\it vectorial bent} if all components of $F$ are bent
functions. As it is well known, $m$ can then be at most $n/2$, see \cite{nyb}. We call a vectorial function 
$F$ plateaued, if every component function of $F$ is plateaued (not necessary with the same integer $s$).

Recall that the {\it nonlinearity} $\mathcal{N}_f$ of a Boolean function $f:\V_n\rightarrow\F_2$ is the distance 
of~$f$ to the set of all affine functions, i.e.,
\[ \mathcal{N}_f := \min_{a\in\V_n,c\in\F_2}|\{x\in\V_n\,:\,f(x) \ne \langle a,x\rangle_n + c\}|. \]
The nonlinearity of a vectorial function is the minimal linearity among its component functions.
Similarly as for Boolean functions, the nonlinearity of a vectorial function 
is expressed via its Walsh transform as 
\[ \mathcal{N}_F := 2^{n-1}-\frac{1}{2}\max_{u\in\V_n,\; v\in\V_m^*}|\mathcal{W}_F(u,v)|. \]

For a function $F$ on $\V_n$ and a nonzero element {$a \in \V_n^*$}, let $D_aF(x) = F(x+a)+F(x)$
denote the {\it first-order derivative} of $F$ at $a$. For elements $a \in \V_n^*$, 
$b \in \V_n$, we define
$$
\delta_F(a,b):=|\{ x \in \V_n \mid f(x+a)+f(x)=b \}|.
$$
Clearly, $\delta_F(a,b)$ must be even (since, if $x_0$ is a solution of the above equation, so is $x_0+a$). The \emph{differential uniformity} of $F$ is defined as
$$
\delta_F:=\max_{a \in \V_n^*,\; b \in \V_n} \delta_F(a,b),
$$
and the {\it differential spectrum} of $F$ is the sequence $(\ell_{F,0},\ell_{F,2},\ldots,\ell_{F,\delta_F})$, where 
$\ell_{F,2i}$ is the frequency of $2i$ in the multiset $[\delta_F(a,b) \mid a \in \V_n^*, b \in \V_n]$. 
Recall that the function $F$ is called {\it almost perfect nonlinear} (APN), if its differential uniformity is 
$\delta_F=2$, which is the smallest possible. 

Henceforth we will identify the vector space $\V_n$ with the finite field $\F_{2^n}$, and $n=2m$ is even.
As inner product we may choose $\langle x,y\rangle_n = \T_n(xy)$, where $\T_n(x)$ denotes the absolute 
trace of $x\in\F_{2^n}$.
 
Recall that for a vectorial bent function $F:\F_{2^n}\rightarrow\F_{2^m}$, $m$ can be at most $n/2$.
In particular, for a function on $\F_{2^n}$, not all components can be bent.
In \cite{ppmb}, research on functions on $\F_{2^n}$ with the maximal possible number of bent component
functions is initiated. One of the main results is the following:
\begin{fact}\cite{ppmb}
A function $F$ on $\F_{2^n}$, $n=2m$, can have at most $2^n-2^m$ bent components. If $F$ attains
this maximal number of bent components, then the set
\begin{equation}
\label{setC} 
\mathcal{C} = \{\alpha\in\F_{2^n}\,:\,F_\alpha\;\mbox{is non-bent}\} 
\end{equation}
forms an $m$-dimensional subspace of $\F_{2^n}$. The $2^m$ non-bent component functions hence 
form a vectorial function, which in \cite{ppmb} is called the {\it bent complement} of $F$.
\end{fact}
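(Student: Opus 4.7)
Both the bound and the structural claim at equality follow from a single analysis based on a maximal subspace of bent components.

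Let $\mathcal{B}=\{\alpha\in\F_{2^n}^*:F_\alpha\text{ is bent}\}$, and let $W\subseteq\F_{2^n}$ be an $\F_2$-subspace of maximum dimension $k$ with $W\setminus\{0\}\subseteq\mathcal{B}$. Any basis $\alpha_1,\ldots,\alpha_k$ of $W$ defines a vectorial bent function $x\mapsto(F_{\alpha_1}(x),\ldots,F_{\alpha_k}(x))$ from $\F_{2^n}$ to $\F_2^k$, so Nyberg's bound gives $k\le m$. For each $\beta\in\F_{2^n}\setminus W$, maximality of $W$ forces the subspace $W\oplus\langle\beta\rangle$ to contain a non-bent component; its nonzero elements decompose as $(W\setminus\{0\})\cup(\beta+W)$ and the first part is inside $\mathcal{B}$, so the coset $\beta+W$ must meet $\mathcal{C}$. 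The $2^{n-k}-1$ cosets of $W$ different from $W$ itself are pairwise disjoint, and $0\in W\cap\mathcal{C}$, whence
\[
|\mathcal{C}|\ \ge\ 1+(2^{n-k}-1)=2^{n-k}\ \ge\ 2^{n-m}=2^m,
\]
which is equivalent to $|\mathcal{B}|\le 2^n-2^m$.

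At equality $|\mathcal{B}|=2^n-2^m$, both inequalities above are tight: $\dim W=m$ and each coset $\beta+W\neq W$ contains a unique element $t_\beta\in\mathcal{C}$. Thus $\mathcal{C}=\{0\}\cup\{t_\beta:\beta\notin W\}$ is a transversal of $W$ of size $2^m$, and it remains to verify additive closure. If $\beta+\gamma\in W$, then $\beta$ and $\gamma$ lie in the same coset, forcing $t_\beta=t_\gamma$ and hence $t_\beta+t_\gamma=0\in\mathcal{C}$. If $\beta+\gamma\notin W$, then $t_\beta+t_\gamma$ automatically lies in the coset $(\beta+\gamma)+W$ and must equal its unique non-bent element $t_{\beta+\gamma}$; equivalently, $F_{t_\beta}+F_{t_\gamma}$ must be non-bent. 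I would argue this last step via the rigid spectral constraint that $F_{t_\beta+v}=F_{t_\beta}+F_v$ is bent for every $v\in W\setminus\{0\}$ (and similarly for $t_\gamma$), which pins down the Walsh spectrum of $F_{t_\beta}$ enough to exclude flatness of $F_{t_\beta}+F_{t_\gamma}$.

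\emph{Main obstacle.} The upper bound is a clean application of Nyberg's inequality combined with a coset count. The hard step is the additive closure in the equality case: combinatorics alone do not exclude that $F_{t_\beta}+F_{t_\gamma}$ is bent, and ruling this out requires exploiting the full Walsh-spectral rigidity enforced by the $2^m-1$ bent translates of $F_{t_\beta}$ within its coset, rather than just the coset count.
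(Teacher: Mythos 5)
Your first half is correct and is essentially the standard argument behind the bound in \cite{ppmb}: any subspace $W$ with $W\setminus\{0\}\subseteq\mathcal{B}$ yields a vectorial bent function into $\F_2^{\dim W}$, so $\dim W\le m$ by Nyberg; maximality forces every nontrivial coset of $W$ to meet $\mathcal{C}$; and the coset count gives $|\mathcal{C}|\ge 2^{n-\dim W}\ge 2^m$, i.e.\ $|\mathcal{B}|\le 2^n-2^m$. The transversal structure you extract in the equality case ($\dim W=m$ and exactly one non-bent element per coset) is also correct. Note that the present paper does not prove this Fact at all --- it is quoted from \cite{ppmb} --- so your attempt has to be measured against the argument in that reference.

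The gap is exactly where you flag it, and the strategy you sketch is unlikely to close it. The constraint that $F_{t_\beta}+F_w$ is bent for all $w\in W\setminus\{0\}$ is a local statement about a single coset; it does not preclude $F_{t_\beta}+F_{t_\gamma}$ from being bent, and in general two non-bent Boolean functions can perfectly well sum to a bent function (take $g$ non-bent and $b$ bent with $g+b$ non-bent, e.g.\ for degree reasons). What actually closes the argument is combinatorial rather than spectral: Nyberg's bound shows that $\mathcal{C}\setminus\{0\}$ meets \emph{every} $(m+1)$-dimensional subspace of $\F_{2^n}$, i.e.\ it is a blocking set with respect to the $m$-dimensional projective subspaces of $PG(n-1,2)$. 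Your coset count reproves only the easy inequality $|\mathcal{C}\setminus\{0\}|\ge 2^m-1$ of the Bose--Burton theorem; the structural claim at equality is precisely the equality case of that theorem, namely that a blocking set of the minimal cardinality $2^m-1$ must be an $m$-dimensional vector subspace with the origin removed. Supplying that equality characterization (or an equivalent direct counting/induction argument using the full blocking property, not just the two cosets of $t_\beta$ and $t_\gamma$) is the missing step; without it the additive closure of $\mathcal{C}$ is not established.
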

Note that every vectorial bent function from $\F_{2^n}$ to $\F_{2^m}$ trivially attains this bound if it is seen 
as a function on $\F_{2^n}$. The set $\mathcal{C}$ in $(\ref{setC})$ is then $\mathcal{C} = \F_{2^m}$,
the 
bent complement is the zero-function. We will call the function with the zero-function and more general (after
applying EA-equivalence) an affine function as bent complement, a trivial example of a function with the
maximal number of bent components.
\begin{remark}
\label{muva}
The effect of adding affine functions can also be visualized well in multivariate form. Let 
$F(x) = (f_1(x),\ldots,f_m(x))$ be a vectorial bent function from $\F_2^n$ to $\F_2^m$, $n=2m$,
and let $a_1(x),\ldots,a_m(x)$ be affine functions. Then the function $F_1(x) = (f_1(x),\ldots,f_m(x),a_1(x))$
from $\F_2^n$ to $\F_2^{m+1}$ has $a_1$ as the only nontrivial non-bent component function, the non-bent 
component functions of {$F_k(x) = (f_1(x),\ldots,f_m(x),a_1(x),\allowbreak \ldots,a_k(x))$} for some $1\le k\le m$, are the
linear combinations of $a_1,\ldots,a_k$. In particular, $F_m(x) = (f_1(x),\ldots,f_m(x),a_1(x),\ldots,a_m(x))$
is a trivial example of a function on $\F_2^n$ with the maximal possible number of bent components, its
bent complement is the function $C(x) = (a_1(x),\ldots,a_m(x))$ from $\F_2^n$ to $\F_2^m$.
By these observations, functions with the maximal number of bent components, without any (nontrivial) affine 
component function are {the most} interesting. 
\end{remark}
As a nontrivial example of a function on $\F_{2^n}$ with the maximal number of bent components, in \cite{ppmb},
the function $F(x) = x^{2^r}\Tnm(x)$, $0 < r < m$, is presented, 
where $\Tnm$ denotes the relative trace mapping from $\F_{2^n}$ to $\F_{2^m}$, for which the component function
$F_\gamma(x) = \T_n(\gamma F(x))$ is bent if and only if $\gamma \not\in \F_{2^m}$.  
In \cite{mztz}, more general, functions of the form $F(x) = x^{2^r}\Tnm(\Lambda(x))$ are considered, where
$\Lambda(x) = x + \sum_{j=1}^\sigma\alpha_jx^{2^tj}$ is a linearized polynomial in $\F_{2^m}[x]$.
Recently, in \cite{akm1}, functions $F$ of such a form with the maximal number of bent components are completely
characterized as follows (see also \cite{zpkll} for a similar result): 
\begin{fact}\cite{akm1}
Let $F(x) = x^{2^r}\Tnm(\Lambda(x))$ for some integer $r\ge 0$ and a linearized polynomial $\Lambda$ with coefficients
in $\F_{2^m}$. The component function $F_\gamma(x) = \T_n(\gamma x^{2^r}\Tnm(\Lambda(x)))$ is bent for 
all $\gamma\not\in\F_{2^m}$, if and only if $\Lambda$ is a permutation of $\F_{2^m}$ (and hence of $\F_{2^n}$),
and consequently $F$ has the maximal number of bent components. 
\end{fact}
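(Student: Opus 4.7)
The plan is to recognise the component function $F_\gamma$ as the pullback of the classical Maiorana--McFarland bent function on $\F_{2^m}\times\F_{2^m}$, and so translate bentness of $F_\gamma$ into an injectivity statement that can be settled by inspecting two explicit kernels. First, I would use the identity $\T_n(\alpha\beta)=\T_m\bigl(\Tnm(\alpha)\beta\bigr)$ for $\beta\in\F_{2^m}$, together with $\Tnm(\Lambda(x))\in\F_{2^m}$, to write
\[
F_\gamma(x)=\T_m\bigl(G_\gamma(x)\,H(x)\bigr),\qquad G_\gamma(x):=\Tnm(\gamma x^{2^r}),\ H(x):=\Tnm(\Lambda(x)),
\]
so that $F_\gamma$ is the composition of the $\F_2$-linear map $\phi_\gamma:=(G_\gamma,H)\colon\F_{2^n}\to\F_{2^m}\times\F_{2^m}$ with the bent function $(u,v)\mapsto\T_m(uv)$. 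Since both source and target of $\phi_\gamma$ have $\F_2$-dimension $2m$, I then claim that $F_\gamma$ is bent if and only if $\phi_\gamma$ is bijective: the forward direction is a change-of-variables in the Walsh sum (the pullback of a bent function under a linear bijection is bent), and the reverse uses that any nonzero $x_0\in\ker\phi_\gamma$ is a period of $F_\gamma$, forcing $\mathcal{W}_{F_\gamma}(u)$ to vanish whenever $\T_n(ux_0)=1$.

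The next step is to compute the two kernels. Because $\ker\Tnm=\F_{2^m}$, a direct computation gives $\ker G_\gamma=\gamma^{-2^{-r}}\F_{2^m}$, an $\F_{2^m}$-line in the two-dimensional $\F_{2^m}$-space $\F_{2^n}$, which coincides with $\F_{2^m}$ exactly when $\gamma\in\F_{2^m}$. On the other side, $\ker H=\Lambda^{-1}(\F_{2^m})$, and the hypothesis $a_j\in\F_{2^m}$ forces $\Lambda(\F_{2^m})\subseteq\F_{2^m}$, so $\F_{2^m}\subseteq\ker H$ unconditionally. The crux of the proof is the equivalence
\[
\ker H=\F_{2^m}\ \Longleftrightarrow\ \Lambda\ \text{is a permutation of $\F_{2^m}$ (equivalently of $\F_{2^n}$).}
\]
The left-hand condition is exactly the statement that the induced map $\bar\Lambda$ on the quotient $\F_{2^n}/\F_{2^m}$ is injective. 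Under the canonical $\F_2$-isomorphism $\F_{2^n}/\F_{2^m}\xrightarrow{\sim}\F_{2^m}$, $\bar x\mapsto\Tnm(x)$, the identity $\Tnm(\Lambda(x))=\sum_j a_j\Tnm(x)^{2^{tj}}$ (valid because $\Tnm$ is $\F_{2^m}$-linear and commutes with the Frobenius) identifies $\bar\Lambda$ with the restriction $\Lambda|_{\F_{2^m}}$, and the short exact sequence $0\to\F_{2^m}\to\F_{2^n}\to\F_{2^n}/\F_{2^m}\to 0$ yields that $\Lambda$ is bijective on $\F_{2^n}$ precisely when it is on $\F_{2^m}$.

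Putting the pieces together closes both directions. If $\Lambda$ is a permutation then $\ker H=\F_{2^m}$, and for $\gamma\notin\F_{2^m}$ the two $\F_{2^m}$-lines $\ker G_\gamma=\gamma^{-2^{-r}}\F_{2^m}$ and $\F_{2^m}$ are distinct, hence meet only at $0$; so $\phi_\gamma$ is bijective and $F_\gamma$ is bent. Conversely, if $\Lambda$ is not a permutation then $\ker H\supsetneq\F_{2^m}$, so there is some $x_0\in\ker H\setminus\F_{2^m}$; taking $\gamma:=x_0^{-2^r}\notin\F_{2^m}$ gives $\Tnm(\gamma x_0^{2^r})=\Tnm(1)=0$, hence $x_0\in\ker G_\gamma\cap\ker H$ and $F_\gamma$ is not bent. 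I expect the only step requiring genuine care to be the identification $\bar\Lambda\equiv\Lambda|_{\F_{2^m}}$: it is elementary but is precisely where the $\F_{2^m}$-rationality of the coefficients of $\Lambda$ enters decisively, and is the place the equivalence between $\Lambda$ permuting $\F_{2^m}$ and $\F_{2^n}$ earns its short proof.
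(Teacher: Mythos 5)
Your proof is correct and complete. Note that the paper does not prove this statement itself --- it is quoted as a Fact from the reference \cite{akm1} (whose title, ``Analysis of $(n,n)$-functions obtained from the Maiorana--McFarland class,'' suggests exactly your viewpoint), so there is no in-paper argument to compare against. Your route --- writing $F_\gamma=\T_m(G_\gamma\cdot H)$ as the pullback of the Maiorana--McFarland bent function under the linear map $\phi_\gamma=(G_\gamma,H)$, reducing bentness to $\ker G_\gamma\cap\ker H=\{0\}$, and identifying $\ker G_\gamma=\gamma^{-2^{-r}}\F_{2^m}$ and $\ker H=\Lambda^{-1}(\F_{2^m})\supseteq\F_{2^m}$ --- is sound at every step; in particular the converse is handled cleanly by exhibiting the explicit witness $\gamma=x_0^{-2^r}$ for $x_0\in\ker H\setminus\F_{2^m}$, and the identification of the induced quotient map with $\Lambda|_{\F_{2^m}}$ correctly delivers the parenthetical claim that $\Lambda$ permutes $\F_{2^m}$ if and only if it permutes $\F_{2^n}$. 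This decomposition is also consonant with the technique the paper does use later (the proof of Theorem \ref{Wosp} splits $x=y+u$ along $\F_{2^m}$ and a complement in the same spirit), so your argument fits naturally into the paper's framework.
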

Clearly, having the maximal number of bent components is an EA-equivalence invariant (and as shown in \cite{mztz}
it is even invariant under CCZ-equivalence). Hence, up to EA-equivalence, we can cover all functions of the form
$F(x) = x^{2^r}\Tnm(\Lambda(x))$ with one fixed integer $r$. We though decide not to keep $r$ fixed, in order to 
have more freedom in finding examples. For the same reason we may assume that $\Lambda$ is normalized, i.e.,
the coefficient for the monomial in $\Lambda$ with largest degree is $1$.

In this article we first address a question in \cite{ppmb} on the maximal nonlinearity of functions on $\F_{2^n}$ 
with the maximal number of bent components. We completely solve this question in Section \ref{sec2} for the class 
of plateaued functions. In Section \ref{sec3}, we analyse in detail Walsh spectrum and differential spectrum for
functions $F$ with the maximal number of bent components of the form $F(x) = x^{2^r}\Tnm(\Lambda(x))$.
We are particularly interested in those functions that attain the maximal possible nonlinearity.
In Section \ref{sec4}, we present some numerical results which also indicate that functions 
with the maximal number of bent components of the form $F(x) = x^{2^r}\Tnm(\Lambda(x))$ separate in many
CCZ-equivalence classes.

\section{Best nonlinearity for functions with maximal number of bent components}
\label{sec2}

We start this section with some useful identities involving the Walsh transform. 

Let $f$ be a Boolean function $f:\fn\to\f$ and $\mathcal{W}_f(u)=\sum_{x\in\fn}(-1)^{f(x)+\T_n(ux)}$ 
its Walsh transform. The \emph{sum-of-square indicator} of $f$ is then defined as
\[ \nu(f)=\sum_{a\in\fn}\mathcal{W}_{D_a f}(0)^2=2^{-n}\sum_{a\in\fn}\mathcal{W}_{f}(a)^4. \]
We will use the following two {lemmas} from \cite{bccl}.
\begin{lemma}\label{lemma:sum-of-square}
Any Boolean function $f:\fn\to\f$ satisfies $\nu(f)\leq 2^n \max_{u\in\fn}(\mathcal{W}_f(u))^2$. 
Equality occurs if and only if $f$ is plateaued, i.e.,
\[ \max_{u\in\fn}|\mathcal{W}_f(u)|=2^{\frac{n+k}{2}} \quad \text{and } \nu(f)=2^{2n+k} 
\quad \text{with } 0\leq k \leq \frac{n}{2}, \quad n\equiv k \mod 2. \]
\end{lemma}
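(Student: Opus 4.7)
My plan is to derive the inequality from a single termwise estimate combined with Parseval's identity, and to read off the equality case directly from when that estimate is tight.

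For the inequality, the main observation is that for every $a\in\fn$ one has
\[ \mathcal{W}_f(a)^4 \;\le\; \mathcal{W}_f(a)^2 \cdot \max_{u\in\fn}\mathcal{W}_f(u)^2, \]
since $\mathcal{W}_f(a)^2\le\max_u\mathcal{W}_f(u)^2$. Summing over $a\in\fn$ and applying Parseval's identity $\sum_{a\in\fn}\mathcal{W}_f(a)^2 = 2^{2n}$ yields $\sum_a \mathcal{W}_f(a)^4 \le 2^{2n}\max_u\mathcal{W}_f(u)^2$, and dividing by $2^n$ produces exactly the claimed bound $\nu(f)\le 2^n\max_u\mathcal{W}_f(u)^2$.

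For the equality characterization, the termwise inequality is tight at a given $a$ if and only if $\mathcal{W}_f(a)^2\in\{0,M^2\}$, where $M:=\max_u|\mathcal{W}_f(u)|$. Consequently, equality in the lemma forces the absolute Walsh spectrum of $f$ to take only the two values $0$ and $M$, which is exactly the definition of a plateaued function. Writing $M=2^{(n+k)/2}$ with $k\equiv n\bmod 2$, back-substitution gives $\nu(f)=2^n M^2 = 2^{2n+k}$. The lower bound $k\ge 0$ is immediate from Parseval, since every Boolean function satisfies $\max_u|\mathcal{W}_f(u)|\ge 2^{n/2}$, and the stated upper bound $k\le n/2$ is the range for plateaued Boolean functions as recorded in \cite{bccl}.

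I do not expect any substantial obstacle: the entire argument rests on Parseval and the definition of plateauedness, and both the inequality and its equality characterization emerge from the same single termwise bound.
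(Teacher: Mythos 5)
The paper does not actually prove this lemma---it imports it verbatim from \cite{bccl}---so there is no in-paper argument to compare against; your proof is the standard one and its core is correct. The termwise bound $\mathcal{W}_f(a)^4\le \mathcal{W}_f(a)^2\cdot\max_{u}\mathcal{W}_f(u)^2$ summed against Parseval gives the inequality, and equality holds iff every $a$ satisfies $|\mathcal{W}_f(a)|\in\{0,M\}$ with $M=\max_u|\mathcal{W}_f(u)|$, which is exactly plateauedness. One step you pass over too quickly: that $M$ can be written as $2^{(n+k)/2}$ with $k$ a nonnegative integer and $k\equiv n\bmod 2$ is itself a (one-line) consequence of Parseval --- if $N$ is the number of nonzero Walsh values then $NM^2=2^{2n}$, so the integer $M^2$ divides $2^{2n}$ and is a power of $2$ that is at least $2^n$. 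That line belongs in the proof rather than being folded into the word ``Writing''.

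The one genuine defect is your last sentence. The clause $k\le n/2$ cannot be ``the range for plateaued Boolean functions as recorded in \cite{bccl}'', because it is false: an affine function is plateaued with $|\mathcal{W}_f(u)|\in\{0,2^n\}$, i.e.\ $k=n$, and more generally any $k$ with $0\le k\le n$, $k\equiv n\bmod 2$, occurs. Indeed this very paper later works with $(m+2)$-plateaued components and with linear ($n$-plateaued) components, both exceeding $n/2$, and Theorem \ref{nobo} concludes $k\ge n/2+1$. So the restriction $k\le n/2$ in the statement is a misprint for $k\le n$, and your attempt to justify it by deferring to the reference does not go through; you should either prove the correct bound $k\le n$ (trivial, since $M\le 2^n$) or flag the statement as erroneous. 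Everything else in your argument stands.
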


\begin{lemma}\label{theotherone} 
Let $F:\fn \to \fn$ be a vectorial function. Then
\begin{align*}
 \sum_{v\in\fn^*}\nu(F_v)\geq (2^n-1)2^{2n+1}.
\end{align*} 
Equality occurs if and only if $F$ is APN.
\end{lemma}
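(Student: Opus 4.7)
The plan is to rewrite the left-hand side as a sum involving the differential distribution $\delta_F(a,b)$, collapse the inner summation over $v$ using orthogonality of additive characters, and then identify the minimum of the resulting combinatorial quantity.

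First, I would note that $D_a F_v(x) = \T_n(v\,D_a F(x))$, so
\begin{equation*}
\mathcal{W}_{D_a F_v}(0) = \sum_{x\in\fn}(-1)^{\T_n(v\,D_a F(x))} = \sum_{b\in\fn}\delta_F(a,b)(-1)^{\T_n(vb)}.
\end{equation*}
Squaring and summing over all $v\in\fn$ collapses by additive character orthogonality to $\sum_{v\in\fn}\mathcal{W}_{D_a F_v}(0)^2 = 2^n\sum_b\delta_F(a,b)^2$. Subtracting the $v=0$ contribution $2^{2n}$ and then summing over $a\in\fn$ yields
\begin{equation*}
\sum_{v\in\fn^*}\nu(F_v) = 2^n\sum_{a,b\in\fn}\delta_F(a,b)^2 - 2^{3n}.
\end{equation*}

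Next, I would separate the $a=0$ term: since $\delta_F(0,0) = 2^n$ and $\delta_F(0,b) = 0$ for $b\neq 0$, it contributes $2^{2n}$ to the inner sum. For each $a\in\fn^*$, every $\delta_F(a,b)$ is a non-negative even integer, hence $\delta_F(a,b)^2 \geq 2\delta_F(a,b)$; combined with $\sum_b\delta_F(a,b) = 2^n$ this gives $\sum_b\delta_F(a,b)^2 \geq 2^{n+1}$, with equality iff $\delta_F(a,b)\in\{0,2\}$ for every $b$. Summing over $a\in\fn^*$ and substituting yields
\begin{equation*}
\sum_{v\in\fn^*}\nu(F_v) \geq 2^n\bigl(2^{2n} + (2^n-1)2^{n+1}\bigr) - 2^{3n} = (2^n-1)2^{2n+1},
\end{equation*}
and equality occurs precisely when $\delta_F(a,b)\in\{0,2\}$ for all $a\in\fn^*$ and $b\in\fn$, i.e., exactly when $F$ is APN. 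There is no genuine obstacle beyond careful bookkeeping: the only nontrivial ingredient is the elementary inequality $\delta^2 \geq 2\delta$ for non-negative even integers $\delta$, which is what pins down the APN characterisation of the equality case.
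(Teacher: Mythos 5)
Your proof is correct. The paper states this lemma without proof, citing \cite{bccl}; your argument --- expressing $\mathcal{W}_{D_aF_v}(0)$ through the differential distribution, collapsing the sum over $v$ by character orthogonality to get $\sum_{v\in\fn^*}\nu(F_v)=2^n\sum_{a,b}\delta_F(a,b)^2-2^{3n}$, and then using $\delta^2\ge 2\delta$ for non-negative even $\delta$ with equality exactly on $\{0,2\}$ --- is precisely the standard proof from that reference, with all the bookkeeping (the $a=0$ and $v=0$ contributions, the final arithmetic) done correctly.
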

For a plateaued vectorial function $F:\F_{2^n}\rightarrow\F_{2^n}$ let
\[ S_i=\left\{v\in\fn^*: \ \max_{u\in\fn}\left|\mathcal{W}_{F_v}(u) \right|= 2^{\frac{n+i}{2}}\right\} \]
be the subset of $\F_{2^n}$ that corresponds to the set of $i$-plateaued component functions.
\begin{proposition}\label{pro:S_i}
Let $F:\fn\to\fn$ be a plateaued vectorial function with nonlinearity $2^{n-1}-\frac{1}{2}2^{\frac{n+k}{2}}$. 
Then we have
\begin{align}\label{eq:Si}
 \sum_{i=0}^{k-1}\left(2^k-2^i \right)|S_i|\leq (2^{k}-2)(2^n-1)
\end{align}
with equality if and only if $F$ is APN.
\end{proposition}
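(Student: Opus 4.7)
The plan is to combine the two lemmas about the sum-of-square indicator, translated through the sets $S_i$, and then perform a short algebraic rearrangement using the fact that the $S_i$ partition $\F_{2^n}^*$.

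First, I would observe that since every component $F_v$ is plateaued, for $v\in S_i$ the equality part of Lemma \ref{lemma:sum-of-square} gives $\nu(F_v)=2^{2n+i}$. Since the sets $S_0,\ldots,S_k$ partition $\fn^*$ (no component can be $i$-plateaued with $i>k$, because the nonlinearity of $F$ is $2^{n-1}-\frac{1}{2}2^{(n+k)/2}$), I obtain
\[
\sum_{v\in\fn^*}\nu(F_v)=\sum_{i=0}^{k}2^{2n+i}|S_i|.
\]
Lemma \ref{theotherone} then yields $\sum_{i=0}^{k}2^{i}|S_i|\geq 2(2^n-1)$, with equality exactly when $F$ is APN.

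Next I would use the partition identity $\sum_{i=0}^{k}|S_i|=2^n-1$ to relate the target sum to the inequality just derived. Expanding,
\[
\sum_{i=0}^{k-1}(2^k-2^i)|S_i|=2^k\sum_{i=0}^{k-1}|S_i|-\sum_{i=0}^{k-1}2^i|S_i|=2^k\bigl(2^n-1-|S_k|\bigr)-\Bigl(\sum_{i=0}^{k}2^i|S_i|-2^k|S_k|\Bigr).
\]
The terms involving $|S_k|$ cancel, leaving
\[
\sum_{i=0}^{k-1}(2^k-2^i)|S_i|=2^k(2^n-1)-\sum_{i=0}^{k}2^i|S_i|.
\]
Inserting the bound $\sum_{i=0}^{k}2^i|S_i|\geq 2(2^n-1)$ immediately produces \eqref{eq:Si}, and equality holds if and only if $F$ is APN, which is the second assertion.

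There is no real obstacle here: the argument is a one-line algebraic rearrangement once the two lemmas are applied. The only point that deserves care is the observation that the cardinalities $|S_i|$ actually partition $\fn^*$, so that the range of indices is exactly $0,\ldots,k$, and that the APN characterisation is inherited verbatim from Lemma \ref{theotherone} because the inequality used to prove \eqref{eq:Si} is the only inequality in the chain.
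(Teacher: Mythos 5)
Your proof is correct and follows essentially the same route as the paper: both apply Lemma \ref{lemma:sum-of-square} to get $\nu(F_v)=2^{2n+i}$ for $v\in S_i$, invoke Lemma \ref{theotherone} for the lower bound $\sum_{i=0}^{k}2^i|S_i|\geq 2(2^n-1)$ with its APN equality condition, and eliminate $|S_k|$ via the partition identity $\sum_{i=0}^{k}|S_i|=2^n-1$. The only difference is cosmetic: you rearrange the target sum first and then insert the bound, whereas the paper substitutes for $|S_k|$ directly in the inequality.
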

\begin{proof}
By the assumption on the nonlinearity and the plateauedness of $F$ we have $|S_k|=2^n-1-\sum_{i=0}^{k-1} |S_i|>0$. By Lemma \ref{lemma:sum-of-square}
and Lemma \ref{theotherone} we have
\[
 (2^n-1)2^{2n+1} \leq \sum_{j=0}^k \sum_{v\in S_j}\nu(F_v)=\sum_{j=0}^k  |S_j| 2^{2n+j}.
\]
Dividing by $2^{2n}$ and substituting $|S_k|=2^n-1-\sum_{i=0}^{k-1} |S_i|$, we get
\[
 2(2^n-1)\leq \sum_{j=0}^{k-1} 2^{j} |S_j| +(2^n-1)2^k - 2^{k} \sum_{j=0}^{k-1}  |S_j|,
\]
i.e.,
\[
  (2^k-2)(2^n-1)\geq \sum_{j=0}^{k-1} (2^k-2^{j}) |S_j|,
\]
with equality if and only if $F$ is APN.
%
\end{proof}

Before we present the main results of this section on the maximal nonlinearity for plateaued functions
with the largest possible number of bent components, we infer from Proposition \ref{pro:S_i}
lower and upper bounds on the number of bent components for plateaued APN functions $F$ with given nonlinearity.
The upper bound holds independently from $F$ being APN, and will then give rise to Theorem \ref{nobo} below.
%
%
\begin{corollary}\label{lem:bound}
Let $n$ be an even integer and $F:\fn\to\fn$ be a plateaued APN function with nonlinearity 
$2^{n-1}-\frac{1}{2}2^{\frac{n+k}{2}}$.
Then the number of bent components of $F$ satisfies the following inequalities.
\begin{align}\label{eq:upper}
\frac{2}{3}(2^n -1)+\frac{1}{3}(2^k-2^2)   \leq |S_0|\leq (2^n-1)\left( 1-\frac{1}{2^k-1}\right).
\end{align}
The upper bound holds independently from $f$ being APN.
\end{corollary}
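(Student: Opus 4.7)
The plan is to derive both bounds directly from Proposition \ref{pro:S_i}, by tracking how $|S_0|$ sits inside the weighted sum in \eqref{eq:Si}. The upper bound will drop out without invoking APN, while the lower bound uses the equality case of that proposition combined with one additional count identity for $\sum_i |S_i|$.

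For the upper bound I would observe that every coefficient $(2^k - 2^i)$ in \eqref{eq:Si} is nonnegative, and the one at $i=0$ equals $2^k - 1$. Discarding the nonnegative terms for $i\geq 1$ on the left of \eqref{eq:Si} therefore yields $(2^k-1)|S_0|\leq (2^k-2)(2^n-1)$, which rearranges to the stated upper bound. No use of APN is needed in this step, which is why the upper bound is valid in general.

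For the lower bound I would backtrack into the proof of Proposition \ref{pro:S_i} and read off a sharper consequence of the APN hypothesis. Equality in Lemma \ref{theotherone} together with the plateauedness equality in Lemma \ref{lemma:sum-of-square} yields $\sum_{i=0}^{k}|S_i|\,2^i = 2(2^n-1)$, while $\sum_{i=0}^{k}|S_i| = 2^n-1$ simply counts $\fn^*$. Subtracting twice the second from the first, and noting that $n$ even forces $|S_i|=0$ for odd $i$ (by the plateaued parity condition), the only remaining negative contribution comes from the $i=0$ term, so the identity collapses to the key relation
$$|S_0| = \sum_{i=2}^{k}(2^i-2)\,|S_i|.$$

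The main step, and the only one requiring any care, is converting this identity into the claimed lower bound. I would split off the $i=k$ term, bound every remaining coefficient $(2^i-2)$ for $2\leq i\leq k-1$ below by $2$, substitute $\sum_{i=2}^{k}|S_i| = 2^n-1-|S_0|$ on the resulting sum, and finally invoke $|S_k|\geq 1$, which holds because the nonlinearity $2^{n-1}-\tfrac{1}{2}2^{(n+k)/2}$ is actually attained. The chain gives $3|S_0| \geq 2(2^n-1) + (2^k-4)|S_k| \geq 2(2^n-1) + (2^k-4)$, and division by $3$ yields the stated bound. A small sanity check is that $k\geq 2$, so that $2^k-4\geq 0$; but $F:\fn\to\fn$ cannot be vectorial bent and $k$ has the parity of $n=2m$, so this is automatic.
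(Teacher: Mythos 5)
Your proof is correct and follows essentially the same route as the paper: the upper bound is obtained identically by discarding the nonnegative terms for $i\geq 1$ in \eqref{eq:Si}, and your lower-bound argument is the paper's computation reorganized around the equivalent identity $|S_0|=\sum_{i\geq 2}(2^i-2)|S_i|$ instead of the equality case of \eqref{eq:Si} directly. The auxiliary facts you invoke ($|S_k|\geq 1$, $k\geq 2$, the vanishing of the odd-indexed $|S_i|$) play the same role in the paper's proof.
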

\begin{proof}
For any plateaued function on $\F_{2^n}$ with nonlinearity $2^{n-1}-\frac{1}{2}2^{\frac{n+k}{2}}$, 
by Equation~\eqref{eq:Si}, we have 
\begin{align*}
|S_0|(2^k-1) \le (2^{k}-2)(2^n-1)-\sum_{i=2}^{k-2}\left(2^k-2^i \right)|S_i|.
\end{align*}
In particular, we have $|S_0|(2^k-1)\leq (2^{k}-2)(2^n-1)$, which gives the desired upper bound. 

Observe that $|S_k|\geq 1 $ implies that 
\begin{align}\label{eq:1}
\sum_{i=2}^{k-2}|S_i|\leq 2^n -2-|S_0|.
\end{align}
Also note that for any $i=2,\ldots, k-2$ we have $2^k-2^{i}\leq 2^k-2^{2}$, and hence
\begin{align}\label{eq:2}
\sum_{i=2}^{k-2}\left(2^k-2^i \right)|S_i|\leq (2^k-2^{2})\sum_{i=2}^{k-2}|S_i|.
\end{align}
Then, by Equations \eqref{eq:1} and \eqref{eq:2}, we obtain
\begin{align*}
\sum_{i=2}^{k-2}\left(2^k-2^i \right)|S_i|\leq (2^k-2^{2})(2^n -2-|S_0|).
\end{align*}
Using that for an APN function we have
\begin{align*}
|S_0|(2^k-1)=(2^{k}-2)(2^n-1)-\sum_{i=2}^{k-2}\left(2^k-2^i \right)|S_i|.
\end{align*}
We conclude that
\begin{align*}
|S_0|(2^k-1)&\geq (2^{k}-2)(2^n-1)-(2^k-2^{2})(2^n -2-|S_0|)\\
&=(2^{k}-2)(2^n-1)-(2^k-2^{2})(2^n -2)+|S_0|(2^k-2^{2}),
\end{align*}
and hence we have
\begin{align*}
3|S_0|\geq 2^{n+1}+2^k-6,
\end{align*}
which gives the desired lower bound in Equation \eqref{eq:upper}.
\end{proof}

%

In Question 5 in \cite{ppmb}, the authors propose the problem of finding the best nonlinearity for functions on $\F_{2^n}$ with the maximal number of bent components. In the following theorem we give a tight upper bound on the nonlinearity
for plateaued functions with the maximal number of bent components.
We remark that having the maximal possible number of bent components is a CCZ-equivalence invariant, see
\cite[Theorem 1]{mztz}, hence the bound in the theorem below applies to any function with the maximal number
of bent components that is CCZ-equivalent to a plateaued function.
%
\begin{theorem}
\label{nobo}
Let $F:\F_{2^n}\rightarrow\F_{2^n}$, $n = 2m$, be a plateaued vectorial function with the maximal number 
$2^n-2^m$ of bent components. Then the nonlinearity of $F$ is at most $2^{n-1}-2^{\lfloor\frac{n+m}{2}\rfloor}$.
\end{theorem}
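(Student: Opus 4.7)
The plan is to squeeze $k$ directly out of the upper bound in Corollary \ref{lem:bound}, which was stated to hold for any plateaued $F$ (no APN needed), and then convert the constraint on $k$ into a bound on the nonlinearity, being careful about parity.

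First I would write the nonlinearity of the plateaued function $F$ as $\mathcal{N}_F = 2^{n-1} - 2^{(n+k)/2 - 1}$ for some nonnegative integer $k$ with the same parity as $n$; since $n = 2m$ is even, $k$ is even. By hypothesis $|S_0| = 2^n - 2^m$. Substituting this and the definition of $k$ into the upper bound of Corollary \ref{lem:bound} gives
\[
(2^n - 2^m)(2^k - 1) \;\le\; (2^n - 1)(2^k - 2),
\]
which, after expanding and cancelling the $2^{n+k}$ terms on both sides, rearranges to
\[
2^{m+k} \;\ge\; 2^n + 2^m + 2^k - 2.
\]

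Next I would read off the constraint $k \ge m+1$. Indeed, if $k \le m$, then $2^{m+k} \le 2^{2m} = 2^n$, forcing $2^m + 2^k \le 2$, which is impossible for $m \ge 1$. Hence $k \ge m+1$. Combining with the parity requirement that $k$ be even: if $m$ is odd then $m+1$ is already even and we get $k \ge m+1$; if $m$ is even then $m+1$ is odd, so the smallest even integer $\ge m+1$ is $m+2$, and we get $k \ge m+2$. In either case $(n+k)/2 - 1 \ge \lfloor (n+m)/2 \rfloor$, since $\lfloor 3m/2 \rfloor = (3m-1)/2$ for odd $m$ and $= 3m/2$ for even $m$. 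Plugging this into the formula for $\mathcal{N}_F$ yields $\mathcal{N}_F \le 2^{n-1} - 2^{\lfloor (n+m)/2\rfloor}$, as required.

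The main obstacle is essentially already resolved by Corollary \ref{lem:bound}; the remaining delicate point in the write-up is the parity bookkeeping that cleanly accounts for the floor function in the final exponent, and in particular ruling out the borderline case $k = m$ when $m$ is even (which is excluded by the strict positivity of $2^m + 2^k - 2$, independently of parity). No further structural input about $F$ beyond plateauedness and $|S_0| = 2^n - 2^m$ is needed.
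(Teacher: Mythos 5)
Your proof is correct and follows essentially the same route as the paper: both apply the upper bound of Corollary \ref{lem:bound} (valid for any plateaued $F$) with $|S_0|=2^n-2^m$, deduce $k\ge m+1$, and then use the parity constraint $k\equiv n\equiv 0\pmod 2$ to get $k\ge m+2$ when $m$ is even, which yields the stated bound. Your rearrangement to $2^{m+k}\ge 2^n+2^m+2^k-2$ is just an equivalent form of the paper's $2^k\ge 2^m+2$, and your explicit parity bookkeeping for the floor in the exponent is a welcome clarification of a step the paper leaves terse.
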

{\it Proof.}
Let $F$ be a plateaued function with nonlinearity $2^{n-1}-\frac{1}{2}2^{\frac{n+k}{2}}$,
i.e., $F$ has at least one $k$-plateaued component function, but no $s$-plateaued component function 
for any $s > k$. If $F$ has the maximal possible number of bent components, i.e., $|S_0| = 2^n-2^{n/2}$, 
then by Corollary \ref{lem:bound} we have
\[ 2^n-2^{n/2} \le (2^n-1)\left(1 - \frac{1}{2^k-1}\right). \]
Solving this equation, we obtain $2^k \ge 2^{n/2}+2$, which implies that $k\ge n/2+1$ if $n/2$ is odd and $k\ge n/2+2$ if $n/2$ is even.
\hfill$\Box$\\[.3em]
%
%

By \cite[Theorem 7]{ppmb}, if $\gcd(r,m) = 1$, then the bent complement of $F(x) = x^{2^r}\Tnm(x)$
has only $(m+1)$-plateaued components if $m$ is odd, and $m$-plateaued and $(m+2)$-plateaued components if $m$ is even. The bound in Theorem \ref{nobo} is hence tight. As one can see from the proof of
\cite[Theorem 7]{ppmb}, in the latter case, the bent complement of $x^{2^r}\Tnm(x)$ has exactly $2(2^m-1)/3$
components that are $m$-plateaued and $(2^m-1)/3$ components that are $(m+2)$-plateaued. 
\begin{remark}
\label{Sm<}
The upper bound in Corollary \ref{lem:bound} also reveals the known fact that a plateaued function on
$\F_{2^n}$, $n$ even, with only bent and semibent components, hence with maximal possible nonlinearity,
can have at most $2(2^n-1)/3$ bent components. 
\end{remark}

%

\section{Walsh and differential spectrum of $x^{2^r}\Tnm(\Lambda(x))$}
\label{sec3}

In this section, we analyse Walsh spectrum and differential spectrum for the functions on $\F_{2^n}$, $n=2m$,
with the maximal number of bent components of the form $F(x) = x^{2^r}\Tnm(\Lambda(x))$.
According to Fact 2, $\Lambda$ will always be a linearized permutation over $\F_{2^m}$.

We first attempt to generalize the analysis of the Walsh spectrum of $x^{2^r}\Tnm(x)$, $\gcd(r,m) = 1$,
in \cite{ppmb}, to arbitrary functions of the form $F(x) = x^{2^r}\Tnm(\Lambda(x))$.
From Fact 2 we know that $F_\gamma$ is bent if and 
only if $\gamma \not\in\F_{2^m}$. Therefore we are interested in the Walsh spectrum of the bent complement
$\{F_\gamma\,:\,\gamma\in\F_{2^m}\}$.
\begin{theorem}
\label{Wosp}
For a linearized permutation $\Lambda \in\F_{2^m}[x]$, let $F:\F_{2^n}\rightarrow\F_{2^n}$, $F(x) = x^{2^r}\Tnm(\Lambda(x))$, and let $H:\F_{2^m}\rightarrow\F_{2^m}$
be given as $H(x) = x^{2^r}\Lambda(x)$. For a nonzero element $\alpha\in\F_{2^m}$ we have $\mathcal{W}_F(\alpha,\beta) = 0$ if $\beta\not\in\F_{2^m}$, and 
\[ \mathcal{W}_F(\alpha,\beta) = 2^m\mathcal{W}_H(\alpha,\beta) \]
if $\beta\in\F_{2^m}$.
\end{theorem}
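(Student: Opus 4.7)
I plan to rewrite the defining sum
\[
\mathcal{W}_F(\alpha,\beta)=\sum_{x\in\F_{2^n}}(-1)^{\T_n(\alpha F(x))+\T_n(\beta x)}
\]
by grouping the terms by $z:=\Tnm(x)\in\F_{2^m}$. Two structural facts about $\Tnm$ drive the argument: first, $\Lambda\in\F_{2^m}[x]$ commutes with the Frobenius $\xi\mapsto\xi^{2^m}$, so $\Tnm\circ\Lambda=\Lambda\circ\Tnm$; second, $\ker\Tnm=\F_{2^m}$, so each fiber $\Tnm^{-1}(z)$ is an affine translate $x_z+\F_{2^m}$ of size $2^m$.

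My first step is to simplify the $F$-contribution using these facts. Since $\Tnm(\Lambda(x))=\Lambda(z)\in\F_{2^m}$, we have $\alpha F(x)=\alpha x^{2^r}\Lambda(z)$. Transitivity $\T_n=\T_m\circ\Tnm$, the $\F_{2^m}$-linearity of $\Tnm$, and $\Tnm(x^{2^r})=z^{2^r}$ then give
\[
\T_n(\alpha F(x))=\T_m\bigl(\alpha\Lambda(z)\,\Tnm(x^{2^r})\bigr)=\T_m(\alpha z^{2^r}\Lambda(z))=\T_m(\alpha H(z)),
\]
which depends on $x$ only through $z$.

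Next I would execute the fiber decomposition. Writing $\Tnm^{-1}(z)=x_z+\F_{2^m}$ and summing each fiber separately,
\[
\mathcal{W}_F(\alpha,\beta)=\sum_{z\in\F_{2^m}}(-1)^{\T_m(\alpha H(z))+\T_n(\beta x_z)}\sum_{h\in\F_{2^m}}(-1)^{\T_n(\beta h)}.
\]
Because $\T_n(\beta h)=\T_m(h\,\Tnm(\beta))$ for $h\in\F_{2^m}$, the inner character sum equals $2^m$ if $\Tnm(\beta)=0$ (i.e., $\beta\in\F_{2^m}$) and $0$ otherwise. If $\beta\notin\F_{2^m}$ every term vanishes, giving $\mathcal{W}_F(\alpha,\beta)=0$. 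If $\beta\in\F_{2^m}$, then also $\T_n(\beta x_z)=\T_m(\beta\Tnm(x_z))=\T_m(\beta z)$, and the outer sum collapses to $2^m\sum_{z\in\F_{2^m}}(-1)^{\T_m(\alpha H(z)+\beta z)}=2^m\mathcal{W}_H(\alpha,\beta)$, as claimed.

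The only step demanding genuine care is the commutation $\Tnm\circ\Lambda=\Lambda\circ\Tnm$, which however follows at once from $\Lambda\in\F_{2^m}[x]$: writing $\Lambda(x)=\sum a_i x^{2^i}$, one has $\Lambda(x)^{2^m}=\sum a_i^{2^m}x^{2^{i+m}}=\sum a_i(x^{2^m})^{2^i}=\Lambda(x^{2^m})$, whence $\Tnm(\Lambda(x))=\Lambda(\Tnm(x))$. Everything else is routine manipulation of character sums and trace identities.
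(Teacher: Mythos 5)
Your proof is correct and follows essentially the same route as the paper: both decompose $\F_{2^n}$ into the additive cosets of $\F_{2^m}$ (your fibers $\Tnm^{-1}(z)$ are exactly the paper's sets $u+\F_{2^m}$, with $z=u+u^{2^m}$ playing the role of the paper's substitution variable) and evaluate the inner character sum $\sum_{h\in\F_{2^m}}(-1)^{\T_n(\beta h)}$. Your observation that $\T_n(\alpha F(x))=\T_m(\alpha H(\Tnm(x)))$ depends on $x$ only through $\Tnm(x)$ is a slightly cleaner packaging of the paper's step-by-step elimination of the cross terms, but the underlying argument is the same.
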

{\it Proof.}
We write $x\in \F_{2^n}$ uniquely as $x = y+u$, where $y\in\F_{2^m}$ and $u\in U$, a fixed complement $U$ of $\F_{2^m}$. Then for a nonzero $\alpha\in\F_{2^m}$ and
$\beta\in\F_{2^n}$ we have
\begin{align*}
\mathcal{W}_F(\alpha,\beta) & = \sum_{x\in\F_{2^n}}(-1)^{\T_n(\alpha x^{2^r}\Tnm(\Lambda(x))) + \T_n(\beta x)} \\
& = \sum_{u\in U}\sum_{y\in\F_{2^m}}(-1)^{\T_n(\alpha(y^{2^r}+u^{2^r})\Tnm(\Lambda(y)+\Lambda(u))) + \T_n(\beta(y+u))} \\
& = \sum_{u\in U}(-1)^{\T_n(\alpha u^{2^r}\Tnm(\Lambda(u))) + \T_n(\beta u)}\sum_{y\in\F_{2^m}}(-1)^{\T_n(\alpha y^{2^r}\Tnm(\Lambda(u)) + \T_n(\beta y)}.
\end{align*}
Observing that 
$$
{
\T_n(\alpha y^{2^r}\Tnm(\Lambda(u)) = \T_m(\Tnm(\alpha y^{2^r}\Tnm(\Lambda(u)))) = \T_m(\Tnm(\Lambda(u))\Tnm(\alpha y^{2^r})) = 0
}
$$
(since $\alpha, y\in\F_{2^m}$), we obtain
\begin{equation}
\label{ppmb12}
\mathcal{W}_F(\alpha,\beta) = \sum_{u\in U}(-1)^{\T_n(\alpha u^{2^r}\Tnm(\Lambda(u))) + \T_n(\beta u)}\sum_{y\in\F_{2^m}}(-1)^{\T_n(\beta y)}.
\end{equation}
If $\beta\in\F_{2^n}\setminus\F_{2^m}$, then the inner sum in $(\ref{ppmb12})$ vanishes. Consequently, $\mathcal{W}_F(\alpha,\beta) = 0$.
If $\beta \in\F_{2^m}$, then we obtain $2^m$ for the inner sum in $(\ref{ppmb12})$. Hence
\[ \mathcal{W}_F(\alpha,\beta) = 2^m\sum_{u\in U}(-1)^{\T_n(\alpha u^{2^r}\Tnm(\Lambda(u))) + \T_n(\beta u)}. \]
Let
\[ \Lambda(x) = \sum_{j=0}^\sigma\alpha_jx^{2^{t_j}} \]
(w.l.o.g. we can assume $t_0 = 0$ and $\alpha_0 =1$), then
\[ u^{2^r}\Tnm(\Lambda(u)) = u^{2^r}\sum_{j=0}^\sigma \alpha_j(u^{2^{t_j}} + (u^{2^m})^{2^{t_j}}). \]
As $u\rightarrow u+u^{2^m}$ is a bijection from $U$ to $\F_{2^m}$, we can substitute $u+u^{2^m}$ by $z$, and obtain
\begin{align*}
\mathcal{W}_F(\alpha,\beta) & = 2^m\sum_{u\in U}(-1)^{\T_n(\alpha u^{2^r}\sum_{j=0}^\sigma \alpha_j(u^{2^{t_j}} + (u^{2^m})^{2^{t_j}})) + \T_m(\beta (u+u^{2^m}))} = \\
& = 2^m\sum_{z\in\F_{2^m}}(-1)^{\T_m(\alpha \Tnm(u^{2^r}\Lambda(z))) + \T_m(\beta z)} = 2^m\sum_{z\in\F_{2m}}(-1)^{\T_m(\alpha\Lambda(z)z^{2^r}) + \T_m(\beta z)} \\
& = 2^m\mathcal{W}_H(\alpha,\beta).
\end{align*}
\hfill$\Box$\\[.5em]
In \cite[Theorem 6]{ppmb} it is shown that for $F(x) = x^{2^r}\Tnm(x)$ we have $\delta_F(a,b) = 2^m$ if 
$a\in\F_{2^m}^*$, and $\delta_F(a,b) = 2^{\gcd(r,m)}$ if $a\not\in\F_{2^m}$. We attempt to analyse the differential spectrum more general for arbitrary  functions 
$F(x) = x^{2^r}\Tnm(\Lambda(x))$, where $\Lambda\in\F_{2^m}[x]$ is a linearized permutation.
We will see that, as for the Wash spectrum, the differential spectrum of $F$ is completely determined 
by the properties of the quadratic function $H(x) = x^{2^r}\Lambda(x)$ on~$\F_{2^m}$.  
\begin{theorem}
\label{delta}
For a linearized permutation $\Lambda\in\F_{2^m}[x]$, $m=n/2$, let $F(x) = x^{2^r}\Tnm(\Lambda(x))$,
and let $H(x) = x^{2^r}\Lambda(x)$. For the differential spectrum of $F$ we have
\[ \delta_F(a,b) \in \left\{\begin{array}{l@{\quad:\quad}l}
 \{0,2^m\} & a\in\F_{2^m}^*, \\
\{0,2^s\} & a\in\F_{2^n}\setminus\F_{2^m}, 
\end{array}\right. \]
where $s$ is the dimension of the solution space (in $\F_{2^m}$) of $z^{2^r}\Lambda(c) + c^{2^r}\Lambda(z)=0$,
$c=\Tnm(a)$, i.e., $\delta_F(a,b) =\delta_H(\Tnm(a),d) \in \{0,2^s\}$ (with $d\in\F_{2^m}$).
\end{theorem}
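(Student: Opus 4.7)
The plan is to compute $D_aF(x) = F(x+a)+F(x)$ directly and then handle the two cases separately. Expanding and using $\F_2$-linearity of $\Lambda$ and $\Tnm$, I would obtain
\[
D_aF(x) = x^{2^r}\Tnm(\Lambda(a)) + a^{2^r}\Tnm(\Lambda(x)) + a^{2^r}\Tnm(\Lambda(a)).
\]
For the easy case $a\in\F_{2^m}^*$, note that $\Tnm(\Lambda(a))=0$, so the equation collapses to $a^{2^r}\Tnm(\Lambda(x))=b$. Since $\Lambda$ is a permutation and $\Tnm:\F_{2^n}\to\F_{2^m}$ is surjective with fibres of size $2^m$, the map $x\mapsto a^{2^r}\Tnm(\Lambda(x))$ is $\F_{2^m}$-valued, surjective onto $\F_{2^m}$, and $2^m$-to-$1$. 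Hence $\delta_F(a,b)=2^m$ for $b\in\F_{2^m}$ and $0$ otherwise.

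For the harder case $a\notin\F_{2^m}$, the key idea is to project the equation $D_aF(x)=b$ onto $\F_{2^m}$ via $\Tnm$. Since $\Lambda$ has coefficients in $\F_{2^m}$, one has $\Tnm(\Lambda(w)) = \Lambda(\Tnm(w))$ for every $w\in\F_{2^n}$, and also $\Tnm(w^{2^r})=\Tnm(w)^{2^r}$. Setting $c = \Tnm(a)\in\F_{2^m}^*$ and $z = \Tnm(x)\in\F_{2^m}$, applying $\Tnm$ to the derivative equation yields
\[
\Lambda(c)z^{2^r}+c^{2^r}\Lambda(z)+c^{2^r}\Lambda(c)=\Tnm(b),
\]
which is precisely $D_cH(z)=\Tnm(b)$ with $H(x)=x^{2^r}\Lambda(x)$. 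So every solution $x$ of $D_aF(x)=b$ projects to a solution of $D_cH(z)=d$, with $d:=\Tnm(b)\in\F_{2^m}$.

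The remaining step, which I expect to be the main technical point, is to show that this projection is a bijection between the solution sets. Given a solution $z\in\F_{2^m}$ of $D_cH(z)=d$ and any lift $x_0\in\F_{2^n}$ with $\Tnm(x_0)=z$, the fibre $\Tnm^{-1}(z)=x_0+\F_{2^m}$ has size $2^m$. A direct substitution using $\Tnm(\Lambda(y))=0$ for $y\in\F_{2^m}$ gives
\[
D_aF(x_0+y)-D_aF(x_0)=y^{2^r}\Tnm(\Lambda(a))=y^{2^r}\Lambda(c),
\]
so in the fibre the equation $D_aF(x)=b$ reduces to $y^{2^r}\Lambda(c)=b-D_aF(x_0)$. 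The right-hand side lies in $\F_{2^m}$ precisely because $\Tnm(b-D_aF(x_0))=0$ (this is exactly the fact that $z$ satisfies the traced equation), and since $\Lambda(c)\in\F_{2^m}^*$ and Frobenius is a bijection on $\F_{2^m}$, there is a unique such $y\in\F_{2^m}$. Thus each solution of $D_cH(z)=d$ lifts to exactly one solution of $D_aF(x)=b$, giving $\delta_F(a,b)=\delta_H(c,d)$.

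Finally, since $H(x)=x^{2^r}\Lambda(x)$ is a quadratic polynomial over $\F_{2^m}$, the map $z\mapsto D_cH(z)-D_cH(0)=z^{2^r}\Lambda(c)+c^{2^r}\Lambda(z)$ is $\F_2$-linear, so $D_cH$ is affine. Consequently $\delta_H(c,d)$ is either $0$ or equals the cardinality $2^s$ of the kernel of this linear map, where $s$ is the dimension of the solution space of $z^{2^r}\Lambda(c)+c^{2^r}\Lambda(z)=0$ in $\F_{2^m}$. This yields $\delta_F(a,b)=\delta_H(\Tnm(a),d)\in\{0,2^s\}$, completing the proof.
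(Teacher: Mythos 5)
Your proof is correct, and it takes a genuinely different route from the paper's. The paper starts from the standard fact that, $F$ being quadratic, $\delta_F(a,b)\in\{0,2^s\}$ where $2^s$ is the size of the kernel of the linear map $L_a(x)=F(x+a)+F(x)+F(a)=x^{2^r}\Tnm(\Lambda(a))+a^{2^r}\Tnm(\Lambda(x))$; it then computes this kernel by observing that for $a\notin\F_{2^m}$ every solution of $L_a(x)=0$ lies on the line $a\F_{2^m}$, and substituting $x=ay$, then $z=cy$, to arrive at $z^{2^r}\Lambda(c)+c^{2^r}\Lambda(z)=0$. You instead keep the full inhomogeneous equation $D_aF(x)=b$, push it down through $\Tnm$ to $D_cH(z)=\Tnm(b)$, and prove that $\Tnm$ restricts to a bijection between the two solution sets by showing that each fibre $x_0+\F_{2^m}$ over a solution $z$ of the traced equation contains exactly one solution of $D_aF(x)=b$ (using $\ker\Tnm=\F_{2^m}$, $\Lambda(c)\ne 0$, and bijectivity of the Frobenius on $\F_{2^m}$). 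The paper's argument is shorter; yours buys the sharper exact identity $\delta_F(a,b)=\delta_H(\Tnm(a),\Tnm(b))$ for \emph{every} $b$, which identifies the element $d$ that the theorem leaves unspecified and pins down exactly which $b$ attain the value $2^s$ — information that is convenient for the counting arguments later in the paper. The two arguments are in fact secretly close: the paper's parametrization $x=ay\mapsto z=cy$ is precisely your projection $\Tnm$ restricted to $\ker L_a\subseteq a\F_{2^m}$, since $\Tnm(ay)=y\Tnm(a)=cy$ for $y\in\F_{2^m}$.
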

{\it Proof.}
Since $F$ is quadratic, $\delta_F(a,b) \in \{0,2^s\}$, where $s$ is the dimension of the solution space of 
 \begin{align*}
 F(x+a) + F(x) + F(a) = x^{2^r}\Tnm(\Lambda(a)) + a^{2^r}\Tnm(\Lambda(x))=0.
 \end{align*}
For $a\in \mathbb{F}_{2^m}$,  this implies $a^{2^r}\Tnm(\Lambda(x))=0$, which holds if and only if 
$x\in\F_{2^m}$. Now suppose that $a\in \mathbb{F}_{2^n} \setminus \mathbb{F}_{2^m}$.  
Since $\Tnm(\Lambda(a)), \Tnm(\Lambda(x)) \in \mathbb{F}_{2^m}$, the equality $x^{2^r}\Tnm(\Lambda(a)) + a^{2^r}\Tnm(\Lambda(x))=0$ implies that $x \in a\mathbb{F}_{2^m}$, i.e., $x=ay$ for some $y\in \mathbb{F}_{2^m}$. Therefore,
 \begin{align}\label{eq:deriF11}
 &x^{2^r}\Tnm(\Lambda(a)) + a^{2^r}\Tnm(\Lambda(x))=
 a^{2^r}y^{2^r}\Tnm(\Lambda(a)) + a^{2^r}\Tnm(\Lambda(ay))=0
 \end{align}
 if and only if $y^{2^r}\Tnm(\Lambda(a)) + \Tnm(\Lambda(ay))=0$. Since $\Lambda$ has coefficients in $\mathbb{F}_{2^m}$, we have $\Tnm(\Lambda(a))=\Lambda(\Tnm(a))$ for any $a\in \mathbb{F}_{2^n}$. Set $c=\Tnm(a)$. Note that $c\neq 0$ since $a\not\in \mathbb{F}_{2^m}$.  That is, Equation \eqref{eq:deriF11} holds if and only if 
 \begin{align}\label{eq:deriF222}
 y^{2^r}\Lambda(\Tnm(a)) + \Lambda(\Tnm(ay))= y^{2^r}\Lambda(\Tnm(a)) + \Lambda(y\Tnm(a))
 = y^{2^r}\Lambda(c) + \Lambda(cy)=0 .
 \end{align}
 Set $z=cy$. Then Equation \eqref{eq:deriF222} is equivalent to $(z^{2^r}/c^{2^r})\Lambda(c) + \Lambda(z)=0$. Therefore, Equation \eqref{eq:deriF11} holds if and only if $z^{2^r}\Lambda(c) + c^{2^r}\Lambda(z)=0$. Hence, $\delta_F(a,b) \in \{0,  2^{s}\}$ for $a\in \mathbb{F}_{2^n} \setminus \mathbb{F}_{2^m} $, where $s$ is the dimension of the solution space of $z^{2^r}\Lambda(\Tnm(a)) + \Tnm(a)^{2^r}\Lambda(z)=0$.
\hfill$\Box$\\[.5em]
\begin{example}
The results on the Walsh spectrum of $x^{2^r}\Tnm(x)$ in \cite{ppmb} for the case that $\gcd(r,m) = 1$
follow from Theorem \ref{Wosp} with the known fact that in this case $H(x) = x^{2^r+1}$ is almost bent 
if $m$ is odd, i.e., all components of $H$ are semibent, and only has bent and semibent components if $m$
is even. 
Similarly, we recover \cite[Theorem 6]{ppmb}, where it is shown that for $F(x) = x^{2^r}\Tnm(x)$ we have 
$\delta_F(a,b)\in \{0,2^{\gcd(r,m)}\}$ for all $a\in\F_{2^n}\setminus\F_{2^m}$ from Theorem \ref{delta}
with the known differential spectrum of $x^{2^r+1}$.
\end{example}
As the Walsh spectrum of $x^{2^r+1}$ is also known for the case that $\gcd(m,r) > 1$ (see for instance 
\cite{coulter}), applying Theorem \ref{Wosp}, we can derive the complete Walsh spectra for the functions
$x^{2^r}\Tnm(x)$, which complements the results in \cite{ppmb} for the special case that $\gcd(m,r) = 1$. 
\begin{corollary}
Let $n=2m$, let $F(x) = x^{2^r}\Tnm(x)$ and suppose that $\gcd(m,r) = d$.
\begin{itemize}
\item[(i)] If $m/d$ is odd, then for $\alpha \in\F_{2^m}^*$ and $\beta\in\F_{2^n}$ we have
$$
{
W_F(\alpha,\beta) \in\{0,\pm 2^{\frac{n+m+d}{2}}\}.
}
$$
\item[(ii)] If $m/d$ is even, then for $\alpha \in\F_{2^m}^*$ and $\beta\in\F_{2^n}$ we have
$$
{
W_F(\alpha,\beta) \in\{0,\pm 2^{\frac{n+m}{2}},\pm 2^{\frac{n+m+2d}{2}}\}.
}$$
\end{itemize}
\end{corollary}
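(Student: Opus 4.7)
The plan is to specialize Theorem \ref{Wosp} to $\Lambda(x)=x$, in which case $H(x)=x^{2^r}\Lambda(x)=x^{2^r+1}$ is the classical Gold power function on $\F_{2^m}$. Theorem \ref{Wosp} already handles the case $\beta\not\in\F_{2^m}$ (the Walsh value is $0$) and reduces the remaining case $\beta\in\F_{2^m}$ to the identity
\[
\mathcal{W}_F(\alpha,\beta)=2^m\,\mathcal{W}_H(\alpha,\beta),\qquad \alpha\in\F_{2^m}^*,\;\beta\in\F_{2^m}.
\]
Thus the entire corollary is reduced to reading off the Walsh spectrum of $x^{2^r+1}$ on $\F_{2^m}$ and multiplying by the factor $2^m$.

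For the second step I would invoke the well-known Walsh spectrum of the Gold function with $\gcd(m,r)=d$ (see \cite{coulter}). If $m/d$ is odd, every component of $x^{2^r+1}$ is $d$-plateaued, so $\mathcal{W}_H(\alpha,\beta)\in\{0,\pm 2^{(m+d)/2}\}$. If $m/d$ is even, the component functions split into bent components and $2d$-plateaued components, yielding $\mathcal{W}_H(\alpha,\beta)\in\{0,\pm 2^{m/2},\pm 2^{(m+2d)/2}\}$. These two cases correspond exactly to the two cases in the corollary.

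Finally, I would translate the exponents by the factor $2^m$ and rewrite everything using $n=2m$. In case (i) we get $m+(m+d)/2=(3m+d)/2=(n+m+d)/2$, giving $\mathcal{W}_F(\alpha,\beta)\in\{0,\pm 2^{(n+m+d)/2}\}$. In case (ii) we obtain $m+m/2=3m/2=(n+m)/2$ and $m+(m+2d)/2=(3m+2d)/2=(n+m+2d)/2$, producing $\mathcal{W}_F(\alpha,\beta)\in\{0,\pm 2^{(n+m)/2},\pm 2^{(n+m+2d)/2}\}$.

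There is no genuine obstacle: once Theorem \ref{Wosp} is available, the argument is a straightforward transfer of the Gold Walsh spectrum from $\F_{2^m}$ to $\F_{2^n}$ through multiplication by $2^m$. The only care needed is in citing the Gold spectrum in the correct parity-dependent form and in tracking the exponent bookkeeping, so that the three distinct values $(n+m+d)/2$, $(n+m)/2$, $(n+m+2d)/2$ appear precisely where they should.
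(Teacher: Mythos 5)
Your proposal is correct and follows exactly the route the paper intends: the corollary is stated immediately after the remark that one should apply Theorem \ref{Wosp} with $\Lambda(x)=x$, so that $H(x)=x^{2^r+1}$, and then read off the known Gold Walsh spectrum from \cite{coulter} and multiply by $2^m$. Your exponent bookkeeping and the parity-dependent case split are both accurate, so nothing is missing.
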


Attaining the bound in Theorem \ref{nobo}, the function $x^{2^r}\Tnm(x)$ exhibits best nonlinearity in
both cases, $m$ odd and $m$ even, whenever $\gcd(m,r) = 1$. By a result in \cite{bccl}, for $m$ odd,
it is the only example for a function of the form $x^{2^r}\Tnm(\Lambda(x))$ with maximal nonlinearity
(up to equivalence via a Frobenius automorphism).
\begin{lemma}\cite[Proposition 7]{bccl}
\label{frombccl}
The only APN-function $H$ on a finite field $\F_{2^m}$ of the form $H(x) = x\Lambda(x)$, where 
$\Lambda$ is a linearized polynomial, is $H(x) = \gamma x^{2^r+1}$ with $\gcd(r,m) = 1$, 
$\gamma\in\F_{2^m}^*$.
\end{lemma}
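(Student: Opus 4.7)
My plan is to transform the APN condition on $H$ into an injectivity condition on a rational map $\varphi$, and then use power-sum identities to force $\Lambda$ to be a monomial.

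Since $H(x)=x\Lambda(x)$ is quadratic, the map $B(x,a):=H(x+a)+H(x)+H(a)=x\Lambda(a)+a\Lambda(x)$ is a symmetric bilinear form, and APN-ness of $H$ is equivalent to: for every $a\in\F_{2^m}\setminus\{0\}$, the $\F_2$-linear kernel $\{x:B(x,a)=0\}$ equals $\{0,a\}$. Dividing by $ax$, this amounts to injectivity of the map $\varphi\colon\F_{2^m}\setminus\{0\}\to\F_{2^m}$, $\varphi(x)=\Lambda(x)/x$. The ``if'' direction is immediate: if $\Lambda(x)=\gamma x^{2^r}$, then $\varphi(x)=\gamma x^{2^r-1}$, which is injective on $\F_{2^m}\setminus\{0\}$ exactly when $\gcd(2^r-1,\,2^m-1)=1$, i.e.\ $\gcd(r,m)=1$.

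For the nontrivial direction, I write $\Lambda(x)=\sum_k \lambda_k x^{2^k}$ and may assume $\lambda_0=0$, since the linear term $\lambda_0 x$ contributes only the linear term $\lambda_0 x^2$ to $H$ and does not affect APN. The orthogonality relation $\sum_{x\in\F_{2^m}\setminus\{0\}}x^{2^k-1}=0$ for $1\le k\le m-1$ gives $\sum_x\varphi(x)=0$, and combined with injectivity this forces the image of $\varphi$ to be exactly $\F_{2^m}\setminus\{0\}$ (the unique missing value must be $0$). Hence $\varphi$ permutes $\F_{2^m}\setminus\{0\}$, so the power-sum identities $\sum_{x}\varphi(x)^a=\sum_{c\in\F_{2^m}\setminus\{0\}}c^a$ hold for every $a$. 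Applying this for $a=2^i+2^j$ with distinct $i,j\in\{0,\ldots,m-1\}$ and expanding $\varphi(x)^a=\varphi(x)^{2^i}\varphi(x)^{2^j}$, a careful bookkeeping of which tuples $(k,p)$ produce an $x$-exponent divisible by $2^m-1$ (using uniqueness of binary representations modulo $2^m-1$) isolates a single contributing pair and yields the constraint $\lambda_d\lambda_{m-d}=0$ for every $d\in\{1,\ldots,m-1\}$. In particular, $I:=\{k:\lambda_k\ne 0\}$ and its ``mirror'' $\{m-k:k\in I\}$ can overlap only at $k=m/2$.

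To drive $|I|$ down to $1$, one iterates with higher-weight exponents $a=2^{i_1}+\cdots+2^{i_s}$, which produce polynomial relations of degree $s$ in the $\lambda_k$; applying these systematically eliminates all remaining nonzero $\lambda_k$ except a single $\lambda_r$, and the surviving monomial $\Lambda(x)=\gamma x^{2^r}$ must satisfy $\gcd(r,m)=1$ by the ``if'' analysis. The main obstacle is the combinatorial bookkeeping in the higher-weight identities---matching $s$-term sums of powers of $2$ modulo $2^m-1$ and showing that the resulting polynomial system in the $\lambda_k$ admits only monomial solutions. A more conceptual alternative is algebro-geometric: factor $\varphi(x)+\varphi(y)=(x+y)G(x,y)$ and apply the Hasse--Weil bound to the plane curve $G(x,y)=0$ to force off-diagonal $\F_{2^m}$-rational points whenever $|I|\ge 2$; this is cleanest for large $m$ but requires separate treatment of small $m$ where the Hasse--Weil bound is not tight.
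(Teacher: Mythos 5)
The paper does not prove this lemma at all: it is imported as \cite[Proposition 7]{bccl} with no argument given, so there is no internal proof to compare against and your attempt has to stand on its own. Its first half does. The reduction of APN-ness of $H(x)=x\Lambda(x)$ to injectivity of $\varphi(x)=\Lambda(x)/x$ on $\F_{2^m}^*$ is correct, the deduction that $\varphi$ must then permute $\F_{2^m}^*$ (via $\sum_{x\neq 0}\varphi(x)=0$) is correct, and the weight-two power-sum identities are handled correctly: uniqueness of the binary representation of $2^i+2^j$ modulo $2^m-1$ really does isolate the single surviving pair $(k,p)\equiv(j-i,\,i-j)\pmod m$, giving $\lambda_d\lambda_{m-d}=0$ for all $d$ (and in fact $\lambda_{m/2}=0$ outright when $m$ is even, slightly more than you state).

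The genuine gap is the final step. The weight-two relations only say that $I=\{k:\lambda_k\neq 0\}$ is disjoint from its mirror $m-I$; they come nowhere near forcing $|I|=1$. For example $I=\{1,2\}$ is compatible with all of them for every $m\geq 5$, so a binomial such as $\lambda_1x^3+\lambda_2x^5$ must be excluded by something else, and that something else is precisely the part you only assert: ``applying these systematically eliminates all remaining nonzero $\lambda_k$.'' For weight $s\geq 3$ the bookkeeping changes character, because sums of $s$ powers of two can collide modulo $2^m-1$ through carries (equal exponents merging into higher ones), so several tuples contribute to each identity and one is left with a coupled polynomial system in the $\lambda_k$ whose solution set must be shown to consist of monomials only; that analysis is the actual content of \cite[Proposition 7]{bccl} and is not a routine verification. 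The Hasse--Weil alternative you sketch has the same status (it needs absolute irreducibility of a component of $G(x,y)=0$ away from the lines $x=0$, $y=0$, $x=y$, plus a separate treatment of small $m$). A smaller point: after normalizing $\lambda_0=0$ you could at best conclude $H(x)=\gamma x^{2^r+1}+\lambda_0x^2$; since $\lambda_0x^2$ is $\F_2$-linear this is still APN and still of the form $x\Lambda(x)$, so the lemma as literally stated must be read modulo addition of such a linear term, and you should say so when undoing the normalization.
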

\begin{corollary}
\label{allbest}
For a linearized permutation $\Lambda\in\F_{2^m}[x]$, let $F(x) = x^{2^r}\Tnm(\Lambda(x))$.
\begin{itemize}
\item[(i)] If $m$ is odd, then the following statements are equivalent.
\begin{itemize}
\item[I.] $F$ has the maximal nonlinearity $\mathcal{N}_F = 2^{n-1}-2^{\frac{n+m-1}{2}}$.
\item[II.] $\delta_F(a,b) \in\{0,2\}$ for all $a\in\F_{2^n}\setminus\F_{2^m}$.
\item[III.] $F(x) = x^{2^r}\Tnm(x)$ with $\gcd(r,m) = 1$ (up to equivalence via a Frobenius automorphism 
and the multiplication of a constant $\gamma\in\F_{2^m}^*$).
\end{itemize}
\item[(ii)] If $m$ is even, then $\delta_F(a,b) \in\{0,2\}$ for all $a\in\F_{2^n}\setminus\F_{2^m}$ if and only if
$F(x) = x^{2^r}\Tnm(x)$ with $\gcd(r,m) = 1$ (up to equivalence via a Frobenius automorphism and the 
multiplication of a constant $\gamma\in\F_{2^m}^*$).
\end{itemize}
\end{corollary}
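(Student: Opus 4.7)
The plan is to convert each of the three conditions I, II, III into an equivalent condition on the auxiliary quadratic function $H(x)=x^{2^r}\Lambda(x)$ on $\F_{2^m}$ via Theorems \ref{Wosp} and \ref{delta}, and then to invoke Lemma \ref{frombccl} to pin $\Lambda$ down up to the permitted equivalence.

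The core step, used in both parts, is II $\Rightarrow$ III. By Theorem \ref{delta} the hypothesis $\delta_F(a,b)\in\{0,2\}$ for $a\notin\F_{2^m}$ translates (letting $c=\Tnm(a)$ run over $\F_{2^m}^*$) into $H$ being APN on $\F_{2^m}$. Since Lemma \ref{frombccl} is formulated for functions of the shape $x\Lambda(x)$, I would first pass from $H$ to $H^{2^{m-r}}$: raising to a power of $2$ is $\F_2$-linear and preserves differential uniformity, and using $x^{2^m}=x$ on $\F_{2^m}$ this new function has the required form $x\Lambda_1(x)$. Lemma \ref{frombccl} then forces $\Lambda_1(x)=\gamma x^{2^{r'}}$ with $\gcd(r',m)=1$ and $\gamma\in\F_{2^m}^*$, whence $\Lambda(x)=\gamma^{2^r}x^{2^{r+r'}}$. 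Since $\gamma^{2^r}\in\F_{2^m}$, the trace is $\F_{2^m}$-linear and commutes with Frobenius, so a short calculation yields
\[
F(x)=\gamma^{2^r}x^{2^r}\Tnm(x)^{2^{r+r'}}.
\]
Composing the output of $F$ with the Frobenius automorphism $y\mapsto y^{2^k}$ for $k\equiv-(r+r')\pmod m$ kills the exponent on the trace factor, and absorbing the remaining constant in $\F_{2^m}^*$ produces $x^{2^s}\Tnm(x)$ with $s\equiv m-r'\pmod m$ and $\gcd(s,m)=1$, which is III.

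For part (i) with $m$ odd I would close the cycle via I $\Rightarrow$ II and III $\Rightarrow$ I. For I $\Rightarrow$ II, Theorem \ref{Wosp} shows that the bent components of $F$ contribute Walsh values of modulus $2^m$ only, while on the non-bent components $\max|\mathcal{W}_F|=2^m\max|\mathcal{W}_H|$; hence the nonlinearity $2^{n-1}-2^{(n+m-1)/2}$ is equivalent to $\max|\mathcal{W}_H|=2^{(m+1)/2}$. Since $H$ is quadratic and therefore plateaued, and no component of $H$ on $\F_{2^m}$ with $m$ odd can be bent, this bound forces every component of $H$ to be semibent, i.e.\ $H$ is almost bent. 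Almost-bentness implies APN, so Theorem \ref{delta} yields II. The direction III $\Rightarrow$ I is a short direct computation: with $\Lambda(x)=x$ the function $H(x)=x^{2^r+1}$ is the Gold function, whose Walsh values lie in $\{0,\pm 2^{(m+1)/2}\}$ for $m$ odd and $\gcd(r,m)=1$, and Theorem \ref{Wosp} lifts this to the stated nonlinearity of $F$. For part (ii) only the converse III $\Rightarrow$ II remains, and it follows immediately from the APN-ness of the Gold function combined with Theorem \ref{delta}.

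The main obstacle will be the algebraic bookkeeping required to normalize $\Lambda(x)=\gamma^{2^r}x^{2^{r+r'}}$ to the canonical form $x^{2^s}\Tnm(x)$: one has to verify carefully that the Frobenius of $\F_{2^n}$ applied to the output interacts correctly with the shift $2^{r+r'}$ sitting inside the factor $\Tnm(x)\in\F_{2^m}$, and that the residual multiplicative constant really lies in $\F_{2^m}^*$ while the new exponent $s$ is coprime to $m$. Once this is checked, everything else---the APN characterization of $H$ via Lemma \ref{frombccl}, the Walsh translation via Theorem \ref{Wosp}, and the plateaued/almost-bent argument for odd $m$---is a direct appeal to earlier results.
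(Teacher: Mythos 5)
Your proposal is correct and follows essentially the same route as the paper: Theorem \ref{Wosp} to translate maximal nonlinearity of $F$ into almost bentness of $H$, the equivalence of almost bent and APN for quadratic functions together with Theorem \ref{delta} to get I $\Leftrightarrow$ II, and Lemma \ref{frombccl} after a Frobenius normalization to get III, with the II $\Leftrightarrow$ III argument independent of the parity of $m$. The only cosmetic difference is that you apply the Frobenius reduction to $H$ directly rather than to $F$ as the paper does, which amounts to the same computation.
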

{\it Proof.}
First consider the case that $m$ is odd. By Theorem \ref{Wosp}, $F$ has the maximal possible nonlinearity
if and only if $H(x) = x^{2^r}\Lambda(x)$ has only semibent components, i.e., $H$ is {\it almost bent}.
Since $H$ is quadratic, $H$ is almost bent if and only if $H$ is APN, i.e., $\delta_F(a,b) \in\{0,2\}$ for all 
$a\in\F_{2^n}\setminus\F_{2^m}$ by Theorem \ref{delta}.
Via a Frobenius automorphism, $F$ is equivalent to a function $\tilde{F}(x) = x\Tnm(\tilde{\Lambda}(x))$,
for some linearized permutation $\tilde{\Lambda}$. We know that $\tilde{F}$ has the maximal nonlinearity
if and only if $\tilde{H}(x) = x\tilde{\Lambda}(x)$ is APN, which by Lemma \ref{frombccl} applies if and only
if $H(x) = x^{2^{\tilde{r}}+1}$ for some $\tilde{r}$ with $\gcd(\tilde{r},m) =1$ 
(up to the multiplication with a constant $\gamma\in\F_{2^m}^*$).
Finally $\tilde{F}(x) = x\Tnm(x^{\tilde{r}+1})$ is equivalent to $F(x) = x^{2^r}\Tnm(x)$, where
$r = m-\tilde{r}$ (via the Frobenius automorphism $x \rightarrow x^{2^{m-\tilde{r}}}$).

Note that the argument for the equivalence of II and III is independent from $m$ being odd, which concludes
also the proof for the case (ii) when $m$ is even.
\hfill$\Box$\\[.5em]
By Corollary \ref{allbest}, for $m$ odd there is only one type of functions of the form 
$x^{2^r}\Tnm(\Lambda(x))$ with maximal nonlinearity, which then also has the smallest possible 
values in the differential spectrum. This is different for even $m$, where we have only one type of
functions with smallest possible values in the differential spectrum, we then also have maximal nonlinearity, 
but as also Table \ref{tab1} shows, there are many classes with maximal nonlinearity and 
$\delta_F(a,b) \in\{0,2,4\}$ if $a\in\F_{2^m}\setminus\F_{2^m}$, which can be considered second best.
For such functions, the differential spectrum is completely described by the Walsh spectrum.
%
%
%
\begin{corollary}
\label{4-number}
Let $m$ be even, and suppose that for a linearized permutation $\Lambda$ of $\F_{2^m}$ the function
$F(x) = x^{2^r}\Tnm(\Lambda(x))$ attains the maximal nonlinearity, i.e., for $F$ we have $|S_0| = 2^n-2^m$, 
$|S_m| > 0$, $|S_{m+2}| > 0$, and $|S_k|=0$ if $k\not\in \{0,m,m+2\}$, and suppose that 
$\delta_F(a,b)\in\{0,2\}$ or $\delta_F(a,b)\in\{0,4\}$ for every $a\in\F_{2^n}\setminus\F_{2^m}$.
Then the number of $a\in\F_{2^n}\setminus\F_{2^m}$ for which $\delta_F(a,b)\in\{0,4\}$ is given by
\begin{equation} 
\label{noofa}
2^m(2^m-1) - 3\cdot 2^{m-1}|S_m|. 
\end{equation}
In particular, $\delta_F(a,b) \in\{0,2\}$ for all $a\in\F_{2^n}\setminus\F_{2^m}$ if and only if $|S_m| = 2(2^m-1)/3$,
which applies if and only if $F$ is equivalent to $x^{2^r}\Tnm(x)$ for some $r$ with $\gcd(r,m)=1$.
\end{corollary}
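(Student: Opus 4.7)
My plan is to reduce the counting problem to a combinatorial question on $\F_{2^m}^*$, then to compute the Walsh fourth-moment $\Sigma := \sum_{v \in \F_{2^n}^*} \nu(F_v)$ in two different ways and equate the results. By Theorem \ref{delta}, for each $a \in \F_{2^n} \setminus \F_{2^m}$ one has $\delta_F(a,b) \in \{0, 2^{s_c}\}$, where $c = \Tnm(a) \in \F_{2^m}^*$ and $s_c$ is the $\F_{2^m}$-dimension of the kernel of $z \mapsto z^{2^r}\Lambda(c) + c^{2^r}\Lambda(z)$. The hypothesis forces $s_c \in \{1,2\}$ for every such $a$. Since $\Tnm$ is $\F_{2^m}$-linear with kernel exactly $\F_{2^m}$, the restriction $\Tnm\colon \F_{2^n}\setminus\F_{2^m}\to \F_{2^m}^*$ is $2^m$-to-one, so writing $N_j = |\{c \in \F_{2^m}^* : s_c = j\}|$ for $j \in \{1,2\}$, the count in $(\ref{noofa})$ equals $2^m N_2$, subject to $N_1 + N_2 = 2^m - 1$.

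Next I would evaluate $\Sigma$ in two ways. Using Lemma \ref{lemma:sum-of-square} together with the plateaued decomposition of $F$ (with $|S_{m+2}| = 2^m - 1 - |S_m|$) gives
\begin{equation*}
\Sigma = (2^n-2^m)\,2^{2n} + |S_m|\,2^{2n+m} + (2^m - 1 - |S_m|)\,2^{2n+m+2}.
\end{equation*}
Alternatively, the standard fourth-moment identity $\sum_{u,v} \mathcal{W}_F(u,v)^4 = 2^{2n} \sum_{a,b}\delta_F(a,b)^2$ (obtained by expanding $\mathcal{W}_F(u,v)^2$ as a double sum over $(x,x+a)$) gives $\Sigma = 2^n \sum_{a,b}\delta_F(a,b)^2 - 2^{3n}$, where the $-2^{3n}$ subtracts $\nu(F_0)$. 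I then partition the differential sum according to $a=0$ (contribution $2^{2n}$), $a \in \F_{2^m}^*$ (Theorem \ref{delta} forces $\delta_F(a,\cdot) \in \{0,2^m\}$, supported on $2^m$ values of $b$, giving $(2^m-1)\cdot 2^{3m}$ in total), and $a \in \F_{2^n}\setminus\F_{2^m}$ (each $a$ contributes $2^{n+s_c}$, assembling into $2^{n+m+1}N_1 + 2^{n+m+2}N_2$).

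Equating the two expressions for $\Sigma$ with $n=2m$ and dividing by $2^{5m}$ produces the linear relation
\begin{equation*}
2N_1 + 4N_2 = 4(2^m-1) - 3|S_m|,
\end{equation*}
which together with $N_1 + N_2 = 2^m - 1$ forces $N_2 = (2^m-1) - \tfrac{3}{2}|S_m|$, hence $2^m N_2 = 2^m(2^m-1) - 3\cdot 2^{m-1}|S_m|$, as required. For the ``in particular'' statement, $\delta_F(a,b) \in \{0,2\}$ for every $a \not\in \F_{2^m}$ is plainly equivalent to $N_2 = 0$, which by the formula is in turn equivalent to $|S_m| = 2(2^m-1)/3$; the further equivalence with $F$ being Frobenius-equivalent (up to a scalar in $\F_{2^m}^*$) to $x^{2^r}\Tnm(x)$ with $\gcd(r,m)=1$ is Corollary \ref{allbest}(ii). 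The main care is the clean assembly of the Walsh/differential fourth-moment identity and the three-way partition of $\sum_{a,b}\delta_F(a,b)^2$; once those are correctly in place, the rest is a short two-variable linear solve.
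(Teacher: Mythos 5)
Your proposal is correct, and the arithmetic checks out: equating the two evaluations of $\Sigma=\sum_{v\neq 0}\nu(F_v)$ with $n=2m$ and dividing by $2^{5m}$ does give $2N_1+4N_2=4(2^m-1)-3|S_m|$, which with $N_1+N_2=2^m-1$ yields $2^mN_2=2^m(2^m-1)-3\cdot 2^{m-1}|S_m|$. However, your route differs from the paper's. The paper transfers the whole question to the quadratic function $H(x)=x^{2^r}\Lambda(x)$ on $\F_{2^m}$ (via Theorems \ref{Wosp} and \ref{delta}, which identify the bent components of $H$ with the $m$-plateaued components of $F$ and relate the two differential spectra), and then simply cites Proposition~22 of \cite{cdp} for the count $\ell_{H,4}=2^{m-2}(2^m-1)-3\cdot 2^{m-3}|S_0^H|$ of pairs with $\delta_H=4$, using quadraticity of $H$ to convert a count of pairs $(\alpha,\beta)$ into a count of directions $\alpha$. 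You instead work directly with $F$ on $\F_{2^n}$ and re-derive the needed relation from the Walsh fourth-moment identity $\sum_{u,v}\mathcal{W}_F(u,v)^4=2^{2n}\sum_{a,b}\delta_F(a,b)^2$ together with the three-way partition of $a$. This is self-contained (you effectively reprove the relevant special case of \cite[Prop.~22]{cdp}), and it is essentially the same computation that the paper itself packages later as Lemma \ref{diffspec} and Lemma \ref{onelemma} (compare the remark $3|S_m|=2|A_1|$ following Equation \eqref{eq:upperr}, which combined with $|A_1|+|A_2|=2^m-1$ gives exactly your linear solve). So: correct, a legitimate alternative to the citation-based proof, at the cost of carrying out the moment computation explicitly; the paper's version is shorter but leans on an external result and on the $F\leftrightarrow H$ dictionary.
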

{\it Proof.} 
Let $H:\F_{2^m}\rightarrow\F_{2^m}$, $m$ even, be a function with only bent and semibent components
and suppose that $\delta_H(\alpha,\beta) \in \{0,2,4\}$ for all $(\alpha,\beta) \in\F_{2^m}^*\times\F_{2^m}$.
By Proposition~22 in \cite{cdp}, the number of pairs $(\alpha,\beta) \in\F_{2^m}^*\times\F_{2^m}$ for which
$\delta_H(\alpha,\beta) = 4$ is then fixed as
\[ \ell_{H,4} = 2^{m-2}(2^m-1) - 3\cdot 2^{m-3}|S_0|, \]
where here, $|S_0|$ denotes the number of bent components for the function $H$.
Since in our case $H(x) = x^{2^r}\Lambda(x)$ is quadratic, $\delta_H(\alpha,\tilde{\beta}) = 4$ for some 
$\tilde{\beta}\in\F_{2^m}$, implies that $\delta_H(\alpha,\beta)\in\{0,4\}$ for all $\beta\in\F_{2^m}$, and there 
are exactly $2^{m-2}$ elements $\beta\in\F_{2^m}$ for which $\delta_H(\alpha,\beta) = 4$.
As we have $\Tnm(a) = \alpha$ for exactly $2^m$ elements $a\in\F_{2^n}$, by Theorem \ref{delta} we have 
$4\cdot\ell_{H,4}$ elements $a\in\F_{2^n}\setminus\F_{2^m}$ for which $\delta_F(a,b)\in\{0,4\}$.

Observe that the term in $(\ref{noofa})$ vanishes if and only if $|S_m| = 2(2^m-1)/3$.
(Recall that by Remark \ref{Sm<}, $|S_m|$ is at most $2(2^m-1)/3$.)
With Corollary \ref{allbest} (ii), the last statement then follows.
\hfill$\Box$\\[.5em]

We illustrate our results presenting a further class of functions $F(x) = x^{2^r}\Tnm(\Lambda(x))$ with maximal
nonlinearity (and $\delta_F(a,b)\in\{0,4\}$ for all $a\in\F_{2^n}\setminus\F_{2^m}$).
\begin{proposition}
\label{bino}
Let $m\equiv 2 \mod 4$, let $\beta$ be an element of $\F_{2^m}$ which is not a $(2^{2r}-1)$th power, and
let $\Lambda (x)=x^{2^{3r}}+\beta x^{2^{r}}\in \mathbb{F}_{2^m}[x]$. 
Then $F(x) = x^{2^r}\Tnm(\Lambda(x))$ is a function with the maximal number of bent components, 
largest possible nonlinearity, and $\delta_F(a,b) \in \{0,4\}$ for all $a\in\F_{2^n}\setminus\F_{2^m}$.
More precisely, $F$ has only bent and $(m+2)$-plateaued components.
\end{proposition}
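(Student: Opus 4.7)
The plan is to reduce both the Walsh and differential analysis of $F$ to classical facts about Gold-type monomials on $\F_{2^m}$, via Theorems \ref{Wosp} and \ref{delta}. As a preliminary step, I would verify that $\Lambda$ is a linearized permutation of $\F_{2^m}$: a nonzero kernel element would satisfy $(x^{2^r})^{2^{2r}-1}=\beta$, contradicting the hypothesis that $\beta$ is not a $(2^{2r}-1)$th power. Hence $\Lambda$ is a permutation, and Fact 2 guarantees that $F$ has the maximal number of bent components.

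Next I would rewrite $H(x)=x^{2^r}\Lambda(x)=x^{2^{3r}+2^r}+\beta x^{2^{r+1}}$ and exploit two structural features. First, the summand $\beta x^{2^{r+1}}$ is a linearized monomial, so for every $\gamma$ the Walsh transform of $H_\gamma$ is obtained from that of $\T_m(\gamma x^{2^{3r}+2^r})$ by a translation of the argument; it does not alter Walsh magnitudes. Second, $x^{2^{3r}+2^r}=(x^{2^{2r}+1})^{2^r}$, so by Frobenius invariance of $\T_m$ the Walsh spectrum of $x^{2^{3r}+2^r}$ coincides (as a multiset over the component index) with that of the Gold function $g(x)=x^{2^{2r}+1}$ on $\F_{2^m}$.

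For the Walsh part, I would invoke the standard Walsh spectrum of Gold. Writing $d=\gcd(2r,m)$, the assumption $m\equiv 2\bmod 4$ together with the implicit parameter restriction $\gcd(r,m/2)=1$ (needed, and consistent with $\beta$ being a non-$(2^{2r}-1)$th power) yields $d=2$ and $m/d$ odd. In this regime Gold has spectrum $\{0,\pm 2^{(m+2)/2}\}$, so every component of $g$, and therefore of $H$, is semibent on $\F_{2^m}$. By Theorem \ref{Wosp}, every non-bent component of $F$ has Walsh values in $\{0,\pm 2^{(3m+2)/2}\}$, i.e., is $(m+2)$-plateaued on $\F_{2^n}$. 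This yields nonlinearity $2^{n-1}-2^{3m/2}$, which attains the bound of Theorem \ref{nobo} since $\lfloor(n+m)/2\rfloor=3m/2$ when $m$ is even.

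Finally, for the differential spectrum I would apply Theorem \ref{delta}. Substituting $\Lambda$ into $z^{2^r}\Lambda(c)+c^{2^r}\Lambda(z)=0$, the two $\beta$-contributions $\beta c^{2^r}z^{2^r}$ cancel in characteristic $2$, leaving $z^{2^r}c^{2^{3r}}+c^{2^r}z^{2^{3r}}=0$. For $c\in\F_{2^m}^{*}$ and $z\neq 0$ this is equivalent, after dividing by $(cz)^{2^r}$ and extracting a $2^r$th root, to $(z/c)^{2^{2r}-1}=1$; the number of such $z/c$ in $\F_{2^m}^{*}$ equals $\gcd(2^{2r}-1,2^m-1)=2^{\gcd(2r,m)}-1=3$. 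Together with $z=0$ this yields exactly $4$ solutions in $\F_{2^m}$, so $\delta_F(a,b)\in\{0,4\}$. The main obstacle is not any single calculation but identifying the right reductions: once $H$ is recognized as a Gold function up to a linear perturbation and a Frobenius twist, the hypothesis $m\equiv 2\bmod 4$ does double duty, forcing both the "odd" branch of the Gold Walsh spectrum and the cyclotomic count $\gcd(2^{2r}-1,2^m-1)=3$.
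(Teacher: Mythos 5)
Your proof is correct, and its differential part is essentially the paper's own computation: in both arguments the $\beta$-terms cancel and one counts the solutions of $(z/c)^{2^{2r}-1}=1$, getting $2^{\gcd(2r,m)}-1=3$ nontrivial solutions and hence $\delta_F(a,b)\in\{0,4\}$ via Theorem \ref{delta}. Where you genuinely diverge is the Walsh part. The paper computes the derivative $H_\gamma(x+a)+H_\gamma(x)+H_\gamma(a)$ of each component of $H(x)=x^{2^r}\Lambda(x)$, shows its linear space has dimension $0$ or $2$ (so each component is bent or semibent), and then rules out bent components only indirectly: since $\delta_F(a,b)\in\{0,4\}$ off $\F_{2^m}$, Corollary \ref{4-number} forces $|S_m|=0$. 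You instead observe that $H(x)=(x^{2^{2r}+1})^{2^r}+\beta x^{2^{r+1}}$ is EA-equivalent to the Gold function $x^{2^{2r}+1}$ (a Frobenius twist plus a linearized summand, neither of which changes Walsh magnitudes), and quote the known Gold spectrum $\{0,\pm 2^{(m+2)/2}\}$ for $\gcd(2r,m)=2$ with $m/2$ odd; this gives all components of $H$ semibent in one step, with no appeal to Corollary \ref{4-number} or to the differential data. Your route is shorter and makes the structural reason transparent ($H$ \emph{is} Gold up to equivalence); the paper's route is self-contained and showcases how its Corollary \ref{4-number} ties the two spectra together. One point to make explicit in either version: the equalities $\gcd(2r,m)=2$ and $\gcd(4r,m)=2$ used by both you and the paper require $\gcd(r,m/2)=1$ in addition to $m\equiv 2\bmod 4$. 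You correctly flag this as an implicit hypothesis --- note it is not forced by the existence of a non-$(2^{2r}-1)$th power $\beta$, since $m$ even already guarantees $\gcd(2^{2r}-1,2^m-1)>1$ --- and the paper silently assumes it as well.
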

{\it Proof.}
First note that $\Lambda$ is a permutation of $\mathbb{F}_{2^m}$ if and only if 
$\beta \not\in \mathrm{Im}(x^{2^{2r}-1})$. Hence $F$ has the maximal number of bent components.

To show that $\delta_F(a,b) \in \{0,4\}$ for all $a\in\F_{2^n}\setminus\F_{2^m}$, we determine
the differential properties of the function $H(x)=x^{2^{r}}\Lambda(x)$ on $\F_{2^m}$. For a nonzero
element $a\in\F_{2^m}$ we have 
\begin{align*}
H(x+a)+H(x)+H(a)&=a^{2^{r}}\Lambda(x)+\Lambda(a)x^{2^{r}}\\
                &=a^{2^{r}}(x^{2^{3r}}+\beta x^{2^{r}})+(a^{2^{3r}}+\beta a^{2^{r}})x^{2^{r}}\\
                &= a^{2^{r}}x^{2^{3r}}+a^{2^{3r}}x^{2^{r}}.
\end{align*}
That is, $H(x+a)+H(x)+H(a)=0$ if and only if  $ax^{2^{2r}}+a^{2^{2r}}x=0$, i.e., $x=0$ or $x^{2^{2r}-1}=a^{2^{2r}-1}$. Since $\gcd (2r,m)=2$, it has exactly $4$ solutions. Hence, $\delta_H(a,b)=0$ or $4$.
With Theorem \ref{delta} our claim follows.

With Theorem \ref{nobo} and Theorem \ref{Wosp}, we have to show that $H$ has only bent and semibent 
component functions.
 Let $H_\gamma=\mathrm{Tr}_m(\gamma H(x))$ be the component function corresponding to 
$\gamma\in\F_{2^m}^*$. Then we have
\begin{align*}
H_\gamma(x+a)+H_\gamma(x)+H_\gamma(a)&=\mathrm{Tr}_m(\gamma (a^{2^{r}}x^{2^{3r}}+a^{2^{3r}}x^{2^{r}}))\\
                &=\mathrm{Tr}_m(\gamma a^{2^{r}}x^{2^{3r}}+\gamma a^{2^{3r}}x^{2^{r}})\\
                &=\mathrm{Tr}_m((\gamma a^{2^{r}}+(\gamma a^{2^{3r}})^{2^{2r}})x^{2^{3r}}).
\end{align*}
Let $\alpha \in \mathbb{F}_{2^m}$ such that $\gamma =\alpha^{2^{r}}$.
Hence, $a$ is in the linear space of $H_\gamma$ if and only if $\alpha a+\alpha^{2^{2r}} a^{2^{4r}}=0$. 
That is, $a=0$ or $a^{2^{4r}-1}=\alpha^{-1(2^{2r}-1)}$. Note that the assumption $m\equiv 2 \mod 4$ 
implies that $\gcd (4r,m)=2$, i.e., $\alpha a+\alpha^{2^{2r}} a^{2^{4r}}=0$ has only the trivial solution
$a=0$, or exactly $4$ solutions. In particular, any component function of $H$ is bent or semibent.
We now may show directly, that we always have a nontrivial solution. On the other hand, since we showed that
$\delta_F(a,b) \in \{0,4\}$ for all $a\in\F_{2^n}\setminus\F_{2^m}$, we immediately obtain $|S_m| = 0$
from Corollary \ref{4-number}.
\hfill$\Box$ \\[.5em]

We proceed with a more detailed analysis of Walsh spectrum and differential spectrum and their connections,
for functions of the form $x^{2^r}\Tnm(\Lambda(x))$ for some linearized permutation $\Lambda$ on $\F_{2^m}$.
For a function $F$ on $\F_{2^n}$ and an element $a\in\F_{2^n}$ let us define $\mu_F(a,i)$ as
\[ \mu_F(a,i)=|\{b\in\fn: \delta_F(a,b)=i \}|, \]
where, as defined above, $\delta_F(a,b)=|\{x\in\fn: \ F(x+a)+F(x)=b\}|$.
We will use the following lemma for arbitrary plateaued functions on $\F_{2^n}$.
%
%
\begin{lemma}
\label{diffspec}
For an even integer $n$, let $F:\fn\to\fn$ be a plateaued, differentially $\delta$-uniform vectorial function. 
Let $|S_j|$ be the number of $j$-plateaued (nonzero) component functions. Then for any integer $k$ we have
\[
\sum_{j=0}^{k-1}\left(2^{k}-2^{j}\right)|S_j|\geq 2^k(2^n-1)-\frac{1}{2^n}\sum_{i=0}^{\delta /2} (2i)^2\sum_{a\in\fn}\mu_F(a,2i)
\]
with equality if and only if the maximal plateauedness level of $F$ is at most $k$, i.e., $F$ does not have
an $s$-plateaued component function for any $s > k$. 
\end{lemma}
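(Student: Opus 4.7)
The plan is to compute $\sum_{v\in\fn^*}\nu(F_v)$ in two different ways---once from plateauedness and once from the differential spectrum---and then to compare the two expressions. This parallels the proof of Proposition~\ref{pro:S_i}, the difference being that no external lower bound on $\sum_{v\in\fn^*}\nu(F_v)$ is invoked; the whole statement reduces to an exact identity plus a single convexity-style step.

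Since every nonzero $F_v$ is $j(v)$-plateaued, Lemma~\ref{lemma:sum-of-square} gives $\nu(F_v)=2^{2n+j(v)}$, so that
\[ \sum_{v\in\fn^*}\nu(F_v)=\sum_{j\geq 0}2^{2n+j}|S_j|. \]
For the second expression, I would start from $\mathcal{W}_{D_aF_v}(0)=\sum_{b\in\fn}(-1)^{\T_n(vb)}\delta_F(a,b)$, apply Parseval in the variable $v$ to obtain $\sum_{v\in\fn}\mathcal{W}_{D_aF_v}(0)^2=2^n\sum_{b\in\fn}\delta_F(a,b)^2$, sum over $a\in\fn$, subtract the $v=0$ term $\nu(F_0)=2^{3n}$, and reindex the remaining double sum by the values of $\delta_F(a,b)$. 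This will yield
\[ \sum_{v\in\fn^*}\nu(F_v)=2^n\sum_{i=0}^{\delta/2}(2i)^2\sum_{a\in\fn}\mu_F(a,2i), \]
where the inner sum may be taken over all of $\fn$ because the only $a=0$ contribution inside the range $i\leq\delta/2$ is through $\mu_F(0,0)=2^n-1$, which is annihilated by the factor $(2\cdot 0)^2=0$.

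Equating the two expressions and dividing by $2^{2n}$ produces the identity
\[ \sum_{j\geq 0} 2^j|S_j|=\frac{1}{2^n}\sum_{i=0}^{\delta/2}(2i)^2\sum_{a\in\fn}\mu_F(a,2i). \]
The rest is a one-line manipulation. I would split the left sum at $k$ and use
\[ \sum_{j\geq k}2^j|S_j|\;\geq\; 2^k\sum_{j\geq k}|S_j|\;=\;2^k(2^n-1)-2^k\sum_{j=0}^{k-1}|S_j|, \]
the last equality coming from $\sum_j|S_j|=2^n-1$. Substituting back and collecting the $|S_j|$-terms with coefficients $2^k-2^j$ gives exactly the stated bound. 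The displayed inequality above is an equality precisely when $2^j=2^k$ on the support of $|S_j|$ for all $j\geq k$, i.e.\ when $|S_j|=0$ for every $j>k$, which is exactly the claimed equality condition. The only mildly delicate point in the whole argument is the bookkeeping of the $a=0$ and $v=0$ boundary terms in the Parseval step; everything else is formal.
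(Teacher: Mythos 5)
Your proof is correct and takes essentially the same route as the paper's: both evaluate $\sum_{v\in\fn^*}\nu(F_v)$ once via the plateaued levels ($\nu(F_v)=2^{2n+j}$ on $S_j$) and once via the differential spectrum (your Parseval step is exactly the paper's character-sum computation of $\sum_{v}\mathcal{W}_{D_aF_v}(0)^2$), and then conclude by bounding $2^j\geq 2^k$ for $j\geq k$, with the same equality condition. Your handling of the $a=0$ and $v=0$ boundary terms matches the paper's as well.
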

\begin{proof}
For $a \in \F_{2^n}^*$ we have
\begin{align*}
\sum_{v\in\fn}\mathcal{W}_{D_aF_v}(0)^2 & = 
\sum_{v,x,y\in\fn}(-1)^{\Tr(v (F(x+a)+F(x)+F(y+a)+F(y)))}\\
  &=2^n|\{(x,y)\in\fn^2: \ D_aF(x)=D_aF(y)\}|\\
  &=2^n\sum_{i=0}^{\delta /2} (2i)^2\mu_F(a,2i)  .
\end{align*}
Summing over $a\in\fn^*$ and using $\mathcal{W}_{D_0F_v}(0)^2=
\mathcal{W}_{D_aF_0}(0)^2=2^{2n}$, we have
\[
 \sum_{v\in\fn^*}\nu(F_v)=2^{n}\sum_{i=0}^{\delta /2} (2i)^2\sum_{a\in\fn}\mu_F(a,2i).
\]
For $v\in S_j$ we have $\nu(F_v)=2^{2n+j}$, thus
\begin{align*}
 \sum_{v\in\fn^*}\nu(F_v)=\sum_{j=0}^{k-1}2^{2n+j}|S_j|+2^{2n+k}N
\end{align*}
for some integer $N$ with
\[
 \sum_{j=0}^{k-1}|S_j|+N\geq 2^n-1.
\]
Note that equality holds if and only if the maximal plateauedness level of $F$ is at most $k$, in which case
we have $N=|S_k|$. Then
\[
  \sum_{v\in\fn^*}\nu(F_v)\geq 2^{2n+k}(2^n-1) - \sum_{j=0}^{k-1}\left(2^{2n+k}-2^{2n+j}\right)|S_j|
\]
and the result follows.
\end{proof}

Note that we can summarize Lemma \ref{diffspec} and Proposition \ref{pro:S_i} in Section \ref{sec2} as follows.
\begin{corollary}
For a plateaued differentially $\delta$-uniform function $F:\fn\to\fn$ with nonlinearity 
$2^{n-1}-\frac{1}{2}2^{\frac{n+k}{2}}$
we have
\[ 2^{k}(2^n-1)-2^{-n}\sum_{i=0}^{\delta /2} (2i)^2\sum_{a\in\fn}\mu_F(a,2i) = \sum_{i=0}^{k-1}\left(2^k-2^i 
\right)|S_i|\leq (2^{k}-2)(2^n-1). \]
\end{corollary}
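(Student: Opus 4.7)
The plan is to combine the two results stated just above in a very direct way, with the nonlinearity hypothesis used to pin down the plateauedness level so that Lemma~\ref{diffspec} fires in its equality case.

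First I would unpack the assumption on the nonlinearity. By the Walsh expression for $\mathcal{N}_F$, saying that $\mathcal{N}_F=2^{n-1}-\frac{1}{2}2^{(n+k)/2}$ means $\max_{u\in\fn,\,v\in\fn^*}|\mathcal{W}_{F_v}(u)|=2^{(n+k)/2}$, hence $F$ has at least one $k$-plateaued component function (so $|S_k|\ge 1$), and no component function is $s$-plateaued for any $s>k$. In other words, the maximal plateauedness level of $F$ is exactly $k$, which is precisely the condition under which Lemma~\ref{diffspec} holds with equality.

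Next I would invoke Lemma~\ref{diffspec} with this $k$. The equality case gives
\[
\sum_{j=0}^{k-1}\bigl(2^{k}-2^{j}\bigr)|S_j|
=2^{k}(2^n-1)-\frac{1}{2^n}\sum_{i=0}^{\delta/2}(2i)^2\sum_{a\in\fn}\mu_F(a,2i),
\]
which is exactly the left-hand equality in the statement. Finally I would apply Proposition~\ref{pro:S_i}, whose hypotheses (plateauedness and nonlinearity $2^{n-1}-\frac{1}{2}2^{(n+k)/2}$) are the same as ours, to obtain
\[
\sum_{j=0}^{k-1}\bigl(2^{k}-2^{j}\bigr)|S_j|\leq (2^{k}-2)(2^n-1),
\]
which is the right-hand inequality. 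Chaining the equality and the inequality yields the corollary.

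There is essentially no obstacle: the only point worth double-checking is the compatibility of hypotheses, namely that the nonlinearity assumption forces the maximal plateauedness level to be exactly $k$ (so that Lemma~\ref{diffspec} applies in its equality form) and simultaneously $|S_k|\ge 1$ with $|S_s|=0$ for $s>k$ (so that Proposition~\ref{pro:S_i} applies with the same $k$). Once this alignment is observed, the corollary is just the concatenation of the two results.
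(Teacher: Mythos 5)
Your proof is correct and matches the paper exactly: the paper states this corollary as a direct summary of Lemma~\ref{diffspec} (in its equality case, triggered because the nonlinearity hypothesis forces the maximal plateauedness level to be exactly $k$) combined with Proposition~\ref{pro:S_i}. Your check that the hypotheses align is precisely the only point that needed verifying.
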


Observe that by Theorem \ref{delta}, for $F(x) = x^{2^r}\Tnm(\Lambda(x))$ and $a\in\F_{2^n}\setminus\F_{2^m}$
with $\delta(a,b) \in \{2^s,0\}$ we have $\delta(a_1,b) \in \{2^s,0\}$ if $\Tnm(a_1) = \Tnm(a)$. Hence we then 
also have $\mu_F(a_1,i) = \mu_F(a,i)$.
Therefore, for a function $F(x) = x^{2^r}\Tnm(\Lambda(x))$ and $z\in\F_{2^m}^*$ we define 
\[ \tilde{\mu}_F(z,i) := \mu_F(a,i)\;\;\mbox{if}\;\; \Tnm(a) = z. \]
Note that for all $a\in\F_{2^n}\setminus\F_{2^m}$ we have $\delta_F(a,b) \in \{0,2^s\}$ {for some integer
$s$ depending only on $a$}. Hence by the definition of $\mu_F(a,i)$ and $\tilde{\mu}_F(z,i)$, for $\Tnm(a) = z \in\F_{2^m}$
we have 
\begin{equation}\label{eq:tilde}
{
\tilde{\mu}_F(z,2^s) = 2^{n-s} \quad \text{and} \quad  \tilde{\mu}_F(z,i) = 0 \quad \text{if } i\ne 2^s.
}
\end{equation}
\begin{lemma}
\label{onelemma}
Let $F(x) = x^{2^r}\Tnm(\Lambda(x))$ for a linearized permutation $\Lambda \in\F_{2^m}[x]$, and let 
$\mathcal{N}_F = 2^{n-1}-\frac{1}{2}2^{(n+k)/2}$ be the nonlinearity of $F$. If the differential uniformity 
of $H(x) = x^{2^r}\Lambda(x)$ is $\delta_H = 2^\sigma$, then
\begin{equation}
\label{mu=Sj}
\frac{1}{2^n}\sum_{s=1}^\sigma 2^{m+2s} \sum_{z\in\F_{2^m}^*}\tilde{\mu}_F(z,2^s) = (2^m-1)2^k - \sum_{j=2}^{k-2}(2^k-2^j)|S_j|. 
\end{equation}
\end{lemma}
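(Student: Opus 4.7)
The plan is to combine Lemma~\ref{diffspec} (used at equality) with Theorem~\ref{delta} and then carry out a bookkeeping argument that collects the $\mu_F(a,\cdot)$ values fiber-by-fiber under $\Tnm$.

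First I would observe that since the nonlinearity of $F$ equals $2^{n-1}-\tfrac12 2^{(n+k)/2}$, the maximal plateauedness level of $F$ is exactly $k$, so Lemma~\ref{diffspec} holds with \emph{equality}:
\[
\sum_{j=0}^{k-1}(2^k-2^j)|S_j| \;=\; 2^k(2^n-1)-\frac{1}{2^n}\sum_{i=0}^{\delta_F/2}(2i)^2\sum_{a\in\fn}\mu_F(a,2i).
\]
On the left-hand side, I would use $|S_0|=2^n-2^m$ (maximal bent components) and $|S_j|=0$ for odd $j$ (since $n$ is even) to rewrite the sum as $(2^k-1)(2^n-2^m)+\sum_{j=2}^{k-2}(2^k-2^j)|S_j|$.

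The main work is in evaluating the inner double sum on the right. Using Theorem~\ref{delta}, I would split it according to whether $a=0$, $a\in\F_{2^m}^*$, or $a\in\fn\setminus\F_{2^m}$. The $a=0$ term contributes zero because $\mu_F(0,2i)=0$ for $1\le i\le\delta_F/2<2^{n-1}$. For $a\in\F_{2^m}^*$, Theorem~\ref{delta} forces $\delta_F(a,b)\in\{0,2^m\}$, and since $\sum_b\delta_F(a,b)=2^n$ I get $\mu_F(a,2^m)=2^m$; this piece therefore contributes $(2^m-1)\cdot 2^{3m}$, which after division by $2^n=2^{2m}$ becomes $2^m(2^m-1)$. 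For $a\in\fn\setminus\F_{2^m}$, Theorem~\ref{delta} and the discussion preceding \eqref{eq:tilde} show that the value $s=s(a)$ depends only on $c=\Tnm(a)$; combined with $\Tnm^{-1}(z)\subseteq\fn\setminus\F_{2^m}$ of size $2^m$ for each $z\in\F_{2^m}^*$ (since $\Tnm(a)=0\Leftrightarrow a\in\F_{2^m}$) and the identity $\tilde{\mu}_F(z,2^s)=2^{n-s}$ from \eqref{eq:tilde}, this piece contributes
\[
\frac{1}{2^n}\sum_{z\in\F_{2^m}^*} 2^m \sum_{s=1}^{\sigma} 2^{2s}\tilde{\mu}_F(z,2^s)
 \;=\;\frac{1}{2^n}\sum_{s=1}^{\sigma}2^{m+2s}\sum_{z\in\F_{2^m}^*}\tilde{\mu}_F(z,2^s).
\]

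Substituting everything back into the equality from Lemma~\ref{diffspec} and isolating the target quantity, the identity reduces to verifying
\[
2^k(2^n-1)-(2^k-1)(2^n-2^m)-2^m(2^m-1)\;=\;(2^m-1)2^k,
\]
which, since $n=2m$, is a short direct calculation: $2^k(2^n-1)-(2^k-1)(2^n-2^m)=2^{k+m}-2^k+2^n-2^m = 2^k(2^m-1)+2^m(2^m-1)$, and the $2^m(2^m-1)$ cancels. No single step is genuinely hard; the only real obstacle is carefully handling the domain splitting and keeping the factor $2^m$ arising from the fiber size of $\Tnm$ and the normalization in \eqref{eq:tilde} synchronized.
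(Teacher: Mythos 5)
Your proposal is correct and follows essentially the same route as the paper: invoke Lemma~\ref{diffspec} at equality (the plateauedness level is exactly $k$), substitute $|S_0|=2^n-2^m$, evaluate the $a\in\F_{2^m}^*$ contribution as $(2^m-1)2^m$ via $\mu_F(a,2^m)=2^m$, and regroup the sum over $a\in\F_{2^n}\setminus\F_{2^m}$ into fibers of $\Tnm$ of size $2^m$ to produce the factor $2^{m+2s}$. The closing arithmetic $(2^m-1)(2^k+2^m)-2^m(2^m-1)=(2^m-1)2^k$ matches the paper's computation.
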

{\it Proof.}
With $|S_0| = 2^n-2^m$, from Lemma \ref{diffspec} we get 
\[ \frac{1}{2^n}\sum_{i=0}^{\delta_F /2} (2i)^2\sum_{a\in\F_{2^n}}\mu_F(a,2i) = (2^m-1)(2^k+2^m) - \sum_{j=2}^{k-2}\left(2^{k}-2^{j}\right)|S_j| . \]
We use that $\mu_F(a,i) = 0$ if $i$ is not a power of $2$, that $\delta_F(a,b)\in\{0,2^m\}$, hence $\mu_F(a,2^m) = 2^{n-m} = 2^m$ if $a\in\F_{2^m}^*$. By the connections between the differential spectra of $F$ and $H$ as given
in Theorem \ref{delta}, we obtain 
\[ \frac{1}{2^n}\sum_{s=1}^{\sigma} 2^{2s}\sum_{a\in\F_{2^n}\setminus\F_{2^m}}\mu_F(a,2^s) + (2^m-1)2^m = (2^m-1)(2^k+2^m) - \sum_{j=2}^{k-2}\left(2^{k}-2^{j}\right)|S_j|  . \]
As $\mu_F(a,2^s) = \tilde{\mu}_F(z,2^s)$ if $\Tnm(a) = z$, this yields
\[ \frac{1}{2^n}\sum_{s=1}^{\sigma} 2^{m+2s}\sum_{z\in\F_{2^m}^*}\tilde{\mu}_F(z,2^s) = (2^m-1)2^k - \sum_{j=2}^{k-2}\left(2^{k}-2^{j}\right)|S_j|  . \]
\hfill$\Box$
\begin{example}
For the function $x^{2^r}\Tnm(x)$, $m=n/2$ odd, and $\gcd(r,m) = 1$, we have $k = m+1$, $|S_{m+1}| = 2^m-1$ ($|S_j| = 0$ if $j\ne 0,m+1$). Since $\delta(a,b) = 2$ or $0$
for $a\not\in\F_{2^m}$ (and all $b$), we have $\mu_F(a,2) = \tilde{\mu}_F(z,2) = 2^{n-1}$ for all $a\in \F_{2^n}\setminus\F_{2^m}$ respectively $z\in\F_{2^m}^*$
($\tilde{\mu}_F(z,2^s) = 0$ for all $z\in\F_{2^m}^*$ if $s\ne 1$). Inserting in $(\ref{mu=Sj})$ we obtain $2^{m+1}(2^m-1)$ on both sides of the equation.
\end{example}

By the above discussions, also for $m$ even we are left behind with solely one type of functions
if we demand that $\delta_F(a,b) \in \{0,2\}$ for all $a\in\F_{2^n}\setminus\F_{2^m}$.
The corresponding function $H(x)$ is then (up to a multiplication by a constant) the Gold function $x^{2^r+1}$,
$\gcd(r,m)= 1$, the only APN function of the form $x\Lambda(x)$, and the only function of the form 
$x\Lambda(x)$ with $2(2^m-1)/3$ bent components and $(2^m-1)/3$ semibent components
($\Lambda$ is some linearized polynomial). As our experimental results indicate there are plenty of
other functions with maximal nonlinearity, of course with $|S_m| < 2(2^m-1)/3$. Many of them
satisfy $\delta_F(a,b)\in\{0,2,4\}$ for all $a\in\F_{2^n}\setminus\F_{2^m}$, in which case $|S_m|$
fixes also the number of $4$'s in the differential spectrum.
But in general, $F$ having the maximal nonlinearity, does not imply that $\delta_F(a,b)\in\{0,2,4\}$ for all 
$a\in\F_{2^n}\setminus\F_{2^m}$, as Example \ref{Ex-8U} in Section \ref{sec4} shows.

%
%

For the case that $|S_m| < 2(2^m-1)/3$ (which applies to all but one type of functions with 
maximal nonlinearity), using the Hasse-Weil bound, we will show an upper bound for $|S_m|$ in terms of 
the polynomial degree of $H(x) =x^{2^r}\Lambda(x)$.

In the proof of the next theorem we will use the following notation for a given function $F(x) = x^{2^r}\Tnm(\Lambda(x))$:
\[ A_s := \{z\in\F_{2^m}^*\,:\, \tilde{\mu}_F(z,2^s)\ne 0\} = 
\{z\in\F_{2^m}^*\,:\, \tilde{\mu}_F(z,2^s) = 2^{n-s}\}, \]
{where the equality follows from \eqref{eq:tilde}.}
We first show a lemma which relates $|S_m|$ with the cardinality of the above defined sets $A_s$.
\begin{lemma}
Let $n=2m$, $m$ even, and suppose that for the linearized permutation $\Lambda\in\F_{2^m}[x]$ the
function $F(x) = x^{2^r}\Tnm(\Lambda(x))$ on $\F_{2^n}$ has the maximal possible nonlinearity
$\mathcal{N}_F = 2^{n-1} - 2^{\frac{n+m}{2}}$. Then we have
\begin{align}\label{eq:upperr}
3|S_m|=\sum_{s=1}^\sigma |A_s|(4-2^{s} ) ,
\end{align}
where $\delta_H = 2^\sigma$ is the differential uniformity of $H(x) = x^{2^r}\Lambda(x)$.
\end{lemma}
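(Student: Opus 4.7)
The plan is to reduce the identity to a direct application of Lemma \ref{onelemma} once the structure of the Walsh spectrum is pinned down. Since the nonlinearity is $\mathcal{N}_F=2^{n-1}-2^{(n+m)/2}$, comparing with the general parametrization $\mathcal{N}_F=2^{n-1}-\tfrac{1}{2}2^{(n+k)/2}$ gives $k=m+2$. Combined with Theorem \ref{Wosp}, which shows that the non-bent components $F_\gamma$ ($\gamma\in\F_{2^m}^*$) correspond to the components of the quadratic function $H(x)=x^{2^r}\Lambda(x)$ on $\F_{2^m}$ with the level of plateauedness shifted up by $m$, the assumption that no level above $m+2$ occurs is equivalent to $H$ having only bent or semibent components. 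Hence $|S_j|=0$ for every $j\notin\{0,m,m+2\}$.

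Plugging $k=m+2$ into Lemma \ref{onelemma}, the sum on the right-hand side collapses to the single term $j=m$, contributing $(2^{m+2}-2^m)|S_m|=3\cdot 2^m|S_m|$, so
\begin{equation*}
\frac{1}{2^n}\sum_{s=1}^\sigma 2^{m+2s}\sum_{z\in\F_{2^m}^*}\tilde\mu_F(z,2^s)=4\cdot 2^m(2^m-1)-3\cdot 2^m|S_m|.
\end{equation*}
Next I would use the key structural observation \eqref{eq:tilde}: for each $z\in\F_{2^m}^*$, the integer $\tilde\mu_F(z,2^s)$ is nonzero for exactly one value of $s$, and equals $2^{n-s}$ on $A_s$. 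Therefore $\sum_{z\in\F_{2^m}^*}\tilde\mu_F(z,2^s)=|A_s|\cdot 2^{n-s}$, and the inner-sum factor $2^{m+2s}\cdot 2^{n-s}/2^n=2^{m+s}$ converts the left-hand side into $\sum_{s=1}^\sigma 2^{m+s}|A_s|$. Dividing through by $2^m$ yields
\begin{equation*}
\sum_{s=1}^\sigma 2^s|A_s|=4(2^m-1)-3|S_m|.
\end{equation*}

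The final step is to exploit that the sets $A_1,\dots,A_\sigma$ partition $\F_{2^m}^*$ (again by \eqref{eq:tilde}), so $\sum_{s=1}^\sigma|A_s|=2^m-1$. Replacing $4(2^m-1)$ by $4\sum_s|A_s|$ and rearranging gives the desired formula $3|S_m|=\sum_{s=1}^\sigma|A_s|(4-2^s)$. There is no real obstacle here; the only point requiring care is matching bookkeeping between the plateauedness levels of $F$ on $\F_{2^n}$ and those of $H$ on $\F_{2^m}$ (so that the truncation to $k=m+2$ is justified), and handling the factor $2^{n-s}$ coming from \eqref{eq:tilde} correctly so that the powers of $2$ telescope cleanly.
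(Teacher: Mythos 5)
Your proof is correct and follows essentially the same route as the paper: apply Lemma \ref{onelemma} with $k=m+2$, use \eqref{eq:tilde} to rewrite the left-hand side as $\sum_s 2^{m+s}|A_s|$, divide by $2^m$, and combine with $\sum_s|A_s|=2^m-1$. Your explicit justification (via Theorem \ref{Wosp}) that only the levels $0$, $m$, $m+2$ occur is a detail the paper states without elaboration, but the argument is the same.
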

{\it Proof.}
Since all non-bent components are $m$-plateaued or $(m+2)$-plateaued, by Lemma \ref{onelemma} we have
\[ \frac{1}{2^n}\sum_{s=1}^\sigma |A_s| 2^{m+2s}2^{n-s} = 2^{m+2}(2^m-1) - 3\cdot 2^m|S_m|. \]
Dividing by $2^m$ we obtain
\[ \sum_{s=1}^\sigma |A_s|2^{s} = 4(2^m-1)-3|S_m|. \]
Then together with $\sum_{s=1}^\sigma |A_s| = 2^m-1$, this yields the desired result.
\hfill$\Box$ \\[.5em]
\begin{remark}
In particular, if $F(x) = x^{2^r}\Tnm(\Lambda(x))$ has maximal possible nonlinearity and 
$\delta_F(a,b) \in \{0,2,4\}$ for all $a\in\F_{2^n}\setminus\F_{2^m}$, then by Equation \eqref{eq:upperr} we have $3|S_m| = 2|A_1|$.
\end{remark}

Let $n= 2m$, $m$ even, and for a linearized permutation let $F(x) = x^{2^r}\Tnm(\Lambda(x))$ 
be a function attaining the maximum nonlinearity. If $\mathrm{deg}(x^{2^r}\Lambda(x))<2^{m/4}$, 
then using the Hasse-Weil bound, we can show an upper bound for the number $|S_m|$ of $m$-plateaued 
components.
\begin{theorem} 
Let $n= 2m$, $m$ even. Suppose that for a linearized permutation $\Lambda \in\F_{2^m}[x]$ the function 
$F(x) = x^{2^r}\Tnm(\Lambda(x))$ has the maximal possible nonlinearity. {Let $\delta_H=2^\sigma$ be the differential uniformity of $H(x)=x^{2^r}\Lambda(x)$.}
If $\sigma>1$ then
\begin{align*}
|S_m|<\frac{2(2^m-1)}{3}-\frac{2^{m/2}(2^{m/2}-\mathrm{deg}(x^{2^r}\Lambda(x))^2)}{3}.
\end{align*}
\end{theorem}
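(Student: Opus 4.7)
The plan is to recast the desired bound as a strict lower bound on the number of non-trivial zeros of a bivariate polynomial, and then invoke the Hasse--Weil bound.

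First, set $P(c,z) := c^{2^r}\Lambda(z) + z^{2^r}\Lambda(c)$; by Theorem~\ref{delta} the set $A_s$ consists of those $c \in \F_{2^m}^*$ for which $z \mapsto P(c, z)$ has kernel of dimension exactly $s$. Since $z = 0$ and $z = c$ always lie in this kernel (using $\Lambda(0)=0$ and characteristic two), each $c \in A_s$ yields $2^s - 2$ further roots. Setting
\[
 N := \#\{(c, z) \in (\F_{2^m}^*)^2 \colon z \neq c, \; P(c, z) = 0\} = \sum_{s \geq 1} |A_s|(2^s - 2),
\]
we have $\sum_s |A_s|\,2^s = N + 2(2^m - 1)$, and combining with the preceding lemma $3|S_m|=\sum_s|A_s|(4-2^s)$ delivers the key identity
\[
 3|S_m| = 2(2^m - 1) - N.
\]
Thus the statement of the theorem is equivalent to $N > 2^m - 2^{m/2} D^2$, where $D := \deg(x^{2^r}\Lambda(x))$.

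Second, I interpret $N$ as a point count on an explicit affine curve. Since $P$ vanishes on each of the pairwise coprime lines $c = 0$, $z = 0$ and $c + z = 0$, it factors as
\[
 P(c, z) = cz(c + z)\, T(c, z), \qquad T \in \F_{2^m}[c, z], \ \ \deg T = D - 3.
\]
Counting the $\F_{2^m}$-points of $P = 0$ in two ways---once as $3 \cdot 2^m - 2 + N$ from the $|A_s|$ data, and once via the line-plus-curve decomposition---gives $N = |T(\F_{2^m})| - \Sigma$, where $\Sigma \leq 3(D - 3)$ bounds the number of $\F_{2^m}$-points of $T = 0$ lying on any of the three lines (on each line $T$ restricts to a univariate polynomial of degree at most $D-3$).

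Third, I apply the Hasse--Weil bound to the plane curve $T = 0$. Under the running hypothesis $D < 2^{m/4}$ (which also makes the claimed bound meaningful) and assuming $T$ carries a geometrically irreducible component defined over $\F_{2^m}$, the Weil estimate for a plane curve of degree at most $D - 3$ gives
\[
 |T(\F_{2^m})| \geq 2^m - (D - 4)(D - 3)\cdot 2^{m/2} - (D - 3),
\]
the last term absorbing the possible deficit from points at infinity. Hence
\[
 N \geq 2^m - (D - 4)(D - 3)\cdot 2^{m/2} - 4(D - 3),
\]
and the inequality $(D-4)(D-3) < D^2$ combined with $D^2 < 2^{m/2}$ makes the right-hand side strictly larger than $2^m - 2^{m/2} D^2$, completing the argument.

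The principal obstacle is the geometric irreducibility required in the third step: one must rule out that $T$ decomposes entirely into components defined only over a proper extension of $\F_{2^m}$, since in that case Hasse--Weil would collapse. The assumption $\sigma > 1$ is essential here, for when $H$ is APN ($\sigma=1$) the curve $T$ degenerates to a union of lines (as in the Gold case $\Lambda(x) = x^{2^{t_\sigma}}$), yielding $N = 0$ and invalidating the strict inequality. Tying this algebro-geometric statement cleanly to the arithmetic hypothesis $\sigma > 1$---for instance by extracting a geometrically irreducible $\F_{2^m}$-rational component of $T$ from any $c \in \F_{2^m}^*$ with kernel dimension at least $2$---is where the technical core of the proof lies.
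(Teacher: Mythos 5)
Your reduction is exactly the one the paper uses: the identity $3|S_m|=2(2^m-1)-N$, where $N$ counts the nontrivial zeros of $G(c,z)=c^{2^r}\Lambda(z)+z^{2^r}\Lambda(c)$ off the lines $c=0$, $z=0$, $c=z$, followed by a Hasse--Weil estimate for the residual curve of degree $D-3$. But the step you flag at the end as ``the technical core'' is precisely the step that must be supplied, and without it the argument is not a proof: Hasse--Weil gives a lower bound of the right order only if you exhibit an absolutely irreducible component of $T=0$ that is defined over $\F_{2^m}$; if every geometrically irreducible component were defined only over a proper extension, $T=0$ could have $O(D^2)$ rational points and the inequality $N>2^m-2^{m/2}D^2$ would fail. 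So as written the proposal establishes the reduction but leaves the decisive claim as an assumption.

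The paper closes this gap as follows. After reducing (WLOG) to $\Lambda$ separable, so that the coefficient $c_0$ of $x$ in $\Lambda$ is nonzero, one computes $\partial G/\partial c = c_0 z^{2^r}$ and $\partial G/\partial z = c_0 c^{2^r}$, so the only singular affine point of the curve $G=0$ is $(0,0)$. The hypothesis $\sigma>1$ produces some $c\in\F_{2^m}^*$ whose kernel has dimension at least $2$, hence a rational point $P=(c,z)$ of $G=0$ with $cz\neq 0$ and $c\neq z$; this point is nonsingular and lies on none of the three linear components. A nonsingular $\F_{2^m}$-rational point lies on a unique absolutely irreducible component, which must therefore be fixed by the Galois action and hence defined over $\F_{2^m}$ (this is Lemma~2.1 of the cited reference on differences of permutation polynomials). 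That component has degree at most $D-3$ and is distinct from $c$, $z$, $c+z$ because $G$ is separable, and Hasse--Weil applied to it (together with a Bezout count for its intersection with $c=z$ and a count of points at infinity, which is where the paper's extra additive terms $2^{r-t}$ and $d-3$ come from) yields the stated bound. This singular-point analysis, and the use of $\sigma>1$ to manufacture the required smooth rational point, is the content your proposal is missing.
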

\begin{proof}
We observe that the numbers $|A_s|$ are determined by the solution space of the equation 
$G(x,z):=x^{2^r}\Lambda(z) + z^{2^r}\Lambda(x)=0$. That is, we have
\begin{align*}
|A_1|2^1+|A_2|2^2+\cdots+ |A_\sigma|2^\sigma=|\lbrace (x,z)\in \F_{2^m}\times \F_{2^m}^* \; : \; G(x,z)=0 \rbrace|.
\end{align*}
Note that for any $z\in \F_{2^m}^*$ $(0,z)$ and $(z,z)$ always the solution of the equation. Hence, 
\begin{align*}
|A_2|(2^2-2)+\cdots+ |A_\sigma|(2^\sigma -2)=|\lbrace (x,z)\in \F_{2^m}^*\times \F_{2^m}^* \; | \; G(x,z)=0  \;\, \text{and}\;\; x\neq z\rbrace|.
\end{align*}
Without loss of generality suppose that $\Lambda(x)$ is separable; and hence $G(x,z)$ is separable. Otherwise, there exists a separable polynomial $\tilde{G}(x,z)$  such that $G(x,z)=\tilde{G}(x,z)^{2^\ell}$ for some positive integer $\ell$. Then we consider $\tilde{G}$ instead of $G$. 

{Write} 
$\Lambda(x)=\sum c_ix^{2^i} $ with $c_0\neq 0$ and $\mathrm{deg}(\Lambda(x))=2^t$. Then 
\begin{align*}
\frac{\partial G(x,z)}{\partial x}= c_0z^{2^r}  \quad \text{and} \quad
 \frac{\partial G(x,z)}{\partial z}= c_0x^{2^r}.
\end{align*}
Hence, $(0,0)$ is the only singular affine point of the curve $G$. Without loss of generality we suppose that $2^r>\mathrm{deg}(\Lambda(x))=2^t$. Since $\sigma>1$ there exists a rational point $P=(x,z)$ on $G$ such that $xz\neq 0$ and $x\neq z$. In particular, $P$ does not lie on the components $x$, $z$, $x+z$ and it is non-singular. Hence, by \cite[Lemma 2.1]{difference} the absolutely irreducible component $C$ of $G$ passing through $P$ is defined over $\F_{2^m}$. Note that $\mathrm{deg}(C)\leq \mathrm{deg}(G/xz(x+z))=2^r+2^t-3$, and that, since $G$ is a separable polynomial, $C$ is a component of $G$ different from $x$, $z$ and $x+z$. Then, by the Hasse-Weil bound, the number of affine rational points $N(C)$ of $C$ satisfies
\begin{align*}
N(C)&\geq 2^m+1-2^{m/2}(2^r+2^t-4)(2^r+2^t-5)-(2^{r-t}+1)\\
&=2^m-(2^{m/2}(\mathrm{deg}(x^{2^r}\Lambda(x))-4)(\mathrm{deg}(x^{2^r}\Lambda(x))-5)+2^{r-t}),
\end{align*}
where $2^{r-t}+1$ is the number of distinct points of $G$ at infinity; namely, $(1:0:0)$, $(0:1:0)$, $(1:\zeta:0)$ with $\zeta^{2^{r-t}-1}=1$. Note that the only rational affine point of $G/xz(x+z)$ is $(0,0)$, which also lies on $x+z$. Hence by Bezout's Theorem, there exist at most $\mathrm{deg}(C)$ affine rational points that lie on $C\cap \{x=z\}$. That is, for $d=\mathrm{deg}(x^{2^r}\Lambda(x))$ we have
\begin{align}\label{eq:sum}
\sum_{i=1}^{\sigma}|A_i|(2^i-2)&\geq 
2^m-(2^{m/2}(d-4)(d-5)+2^{r-t}+d-3) \nonumber \\
& > 2^m-2^{m/2}d^2.
\end{align}
By Equation \eqref{eq:upperr}, we have
\begin{align}\label{eq:summ}
\sum_{i=1}^{\sigma}|A_i|(2^i-2)=2\sum_{i=1}^{\sigma}|A_i|-3|S_m|=2(2^m-1)-3|S_m|.
\end{align}
Then by {Equations} \eqref{eq:sum} and \eqref{eq:summ}, we obtain the desired result.
\end{proof}
%

%
%

\section{Experimental results and remarks}
\label{sec4}

We finish the article with some experimental results and their interpretation.
We present Walsh spectrum and differential spectrum for all polynomials of the form
$F(x) = x^{2^r}\T^n_m(\Lambda(x))$ over $\F_{2^n}$, $\Lambda\in\F_{2^m}[x]$ is a linearized 
permutation polynomial of $\F_{2^m}$, in Table \ref{tab1} for $n = 2m = 8$, and in Table \ref{tab2} for 
$n = 2m = 10$. We remark that Theorem 2.1 in \cite{zhou} presents an explicit representation of all 
linearized permutation polynomials of a finite field $\F_{2^m}$.

Since up to EA-equivalence, all functions of our form can be represented with some fixed integer $r$ and 
and a normalized linearized permutation $\Lambda$, in our calculations we fix $r=1$ and run through all
normalized linearized permutation polynomials in $\F_{2^4}[x]$ respectively $\F_{2^5}[x]$.

We explain the notation in Tables \ref{tab1} and \ref{tab2}: For example, in Category 4 (Cat. 4) in Table \ref{tab1}
we have $15$ functions, for which the Walsh spectrum is $(4^{12}, 6^2, 8^1)$, i.e., besides from the bent 
component functions we have $12$ $4$-plateaued ($|S_4| = 12$), two $6$-plateaued ($|S_6| = 2$) 
component functions, and one $8$-plateaued ($|S_8| = 1$) component function. For all functions $F$ in
Category 4 there are $192$ elements $a\in \F_{2^8}\setminus\F_{2^4}$ for which $\delta_F(a,b) \in \{0,2\}$, 
and there are $48$ elements $a\in \F_{2^8}\setminus\F_{2^4}$ for which $\delta_F(a,b) \in \{0,4\}$.
\begin{tab}
\label{tab1}
Walsh spectrum and differential spectrum for functions $x^{2^r}\T^8_4(\Lambda(x))$ on $\F_{2^8}$ \\[.5em]
\begin{center}
\begin{tabular}{|c|c|c|c|} \hline
Cat. & \# & Walsh spectrum & differential spectrum  \\[.3em] \hline \hline
1 & $2$ & $(4^{10}, 6^5)$ & $\{0, 2\}_{240}$ \\[.3em] \hline
2 & $180$ & $(4^8, 6^7)$ & $\{0, 2\}_{192}$, $\{0, 4 \}_{48}$ \\[.3em] \hline
3 & $750$ & $(4^6, 6^9)$ & $\{0, 2\}_{144}$, $\{0, 4 \}_{96}$ \\[.3em] \hline
4 & $15$ & $(4^{12}, 6^2, 8^1)$ & $\{0, 2\}_{192}$, $\{0, 4 \}_{48}$ \\[.3em] \hline
5 & $280$ & $(4^8, 6^6, 8^1)$ & $\{0, 2\}_{96}$, $\{0, 4 \}_{144}$ \\[.3em] \hline
6 & $11$ & $(4^{12}, 8^3)$ & $\{0, 4 \}_{240}$ \\[.3em] \hline
7 & $105$ & $(6^{14}, 8^1)$ & $\{0, 2\}_{128}$, $\{0, 8 \}_{112}$ \\[.3em] \hline
8 & $1$ & $(8^{15})$ & $\{0, 16 \}_{240}$ \\[.3em] \hline
\end{tabular}
\end{center}
\end{tab}
As seen from Table \ref{tab1}, in dimension $8$, our functions with the maximal possible number of
bent components separate into at least $8$ ($7$ nontrivial) CCZ-equivalence classes. 
Category $8$ contains the trivial example (equivalent to $x^{2^4+1}$). The functions in Categories
$1,2$ and $3$ exhibit the largest possible nonlinearity, see Theorem \ref{nobo}. By Corollary \ref{allbest}, we
know that the first category exactly contains the functions $x^2\T^8_4(x)$ and $x^{2^3}\T^8_4(x)$, which is equivalent
 to $x^2\T^8_4(x^{2^2})$.  By Corollary \ref{4-number}, the differential spectrum in Categories 1,2,3 is determined
by the Walsh spectrum. In fact in dimension $8$ all functions with the same Walsh spectrum also have the same
differential spectrum (whereas we see the same differential spectrum but different Walsh spectrum for
Categories 2 and 4). The same apples to dimension $n=10$, see Table \ref{tab2} below.
But this is not true in general as the following example in dimension $12$ shows.
     \begin{example}
     Let $m=6$, $n=12$, $r=1$, $<\gamma> = \mathbb{F}_{2^m}^{*}$.  
     Let\\ $\Lambda_1(x)=\gamma^{52}x^{32} + \gamma^{40}x^{16} + \gamma^{35}x^8 + \gamma^{52}x^4 + \gamma^{58}x^2$,\\ $\Lambda_2(x)=\gamma^{10}x^{32} + \gamma^{49}x^{16} + \gamma^{26}x^8 + \gamma^{14}x^4 + \gamma^{40}x^2 + \gamma^{30}x$.\\
     Consider $F_1(x)=x^{2^r}\hbox{\rm{Tr}}_{m}^{n}(\Lambda_1(x))$ and $F_2(x)=x^{2^r}\hbox{\rm{Tr}}_{m}^{n}
(\Lambda_2(x))$.\\
Then $F_1$ and $F_2$ have the Walsh spectrum $(6^{35},8^{26},10^{2})$. For the differential spectrum of
$F_1$ we have $\{0,2\}_{2624}$, $\{0,4\}_{960}$, $\{0,8\}_{448}$, whereas the differential spectrum of $F_2$ is 
described by $\{0,2\}_{1728}$, $\{0,4\}_{2304}$.
\end{example}
As seen in the next example, when $m$ is even, the maximal possible nonlinearity does not imply 
$\delta_F(a,b) \le 4$ for all $a\in\F_{2^n}\setminus\F_{2^m}$.
\begin{example}
\label{Ex-8U}
Let $m=6$, $r=1$, $<\gamma>=\mathbb{F}_{2^m}^{*}$, $\Lambda(x)=x^{32} + \gamma^{18}x^{16} + \gamma^{40}x^8 + \gamma^{49}x^4 + \gamma^7x^2 + \gamma^{57}x$. Consider $H(x)=x^{2^r}\Lambda(x)$. Then $H$ has $16$ bent and $47$ semibent components and the differential spectrum of $H$ is $\{0, 2\}_{38}$,  $\{0, 4\}_{18}$,   $\{0, 8\}_{7}$.
\end{example}
\begin{tab}
\label{tab2}
Walsh spectrum and differential spectrum for functions $x^{2^r}\T^{10}_5(\Lambda (x))$ on $\F_{2^{10}}$ \\[.5em]
\begin{center}
\begin{tabular}{|c|c|c|c|} \hline
Cat. & \# & Walsh spectrum & differential spectrum  \\[.3em] \hline \hline
1 & $4$ & $(6^{31})$ &  $\{0, 2\}_{992}$ \\[.3em] \hline
2 & $4650$ & $(6^{29}, 8^2)$ & $\{0, 2\}_{800}$, $\{0, 4 \}_{192}$ \\[.3em] \hline
3 & $43400$ & $(6^{28}, 8^3)$ & $\{0, 2\}_{704}$, $\{0, 4 \}_{288}$ \\[.3em] \hline
4 & $116000$ & $(6^{27}, 8^4)$ &  $\{0, 2\}_{608}$, $\{0, 4 \}_{384}$ \\[.3em] \hline
5 & $77748$ & $(6^{26}, 8^5)$ &  $\{0, 2\}_{512}$, $\{0, 4 \}_{480}$ \\[.3em] \hline
6 & $28210$ & $(6^{25}, 8^6)$ & $\{0, 2\}_{416}, \{0, 4 \}_{576}$ \\[.3em] \hline
7 & $2170$ & $(6^{24}, 8^7)$ &  $\{0, 2\}_{768}$, $\{0, 8\}_{224}$ \\[.3em] \hline
8 & $4092$ & $(6^{30}, 10^1)$ &  $\{0, 2\}$, $\{0, 16 \}_{240}$ \\[.3em] \hline
9 & $9300$ & $(6^{24}, 8^6, 10^1)$ & $\{0, 2\}_{384}$, $\{0, 4 \}_{384}, \{0, 8\}_{224}$ \\[.3em] \hline
10 & $465$ & $(8^{30}, 10^1)$ & $\{0, 2\}_{512}$, $\{0, 16 \}_{480}$ \\[.3em] \hline
11 & $40920$ & $(8^9, 10^{22})$ & $\{0, 2\}_{576}$, $\{0, 4 \}_{192}, \{0, 8\}_{224}$ \\[.3em] \hline
12 & $1$ & $(10^{31})$ &  $\{0, 32 \}_{992}$ \\[.3em] \hline
\end{tabular}
\end{center}
\end{tab}
%
As seen in Table \ref{tab2}, in dimension $10$, our functions separate into at least $12$
CCZ-equivalence classes, which also indicates that there is a large variety of (nontrivial) CCZ-inequivalent functions 
of the form $F(x) = x^{2^r}\Tnm(\Lambda(x))$ with the largest possible number of bent components.

Observe that besides from the first category, which by Corollary \ref{allbest} consists of functions equivalent to
$x^{2^r}\Tnm(x)$, $1\le r\le 4$, there are $6$ more categories with not any linear function as component. 
As also indicated in Remark \ref{muva}, such functions are particularly interesting. Note also that a linear component 
causes the worst possible nonlinearity. In this connection, an interesting question is the following: \\[.3em]
{\it Question 1:}
Find more examples of functions with the maximal number of bent components with nontrivial bent
complement, i.e., functions which are not CCZ-equivalent (possibly not quadratic) to any function 
in this article. Most interesting functions are the ones with high nonlinearity or at least without any linear 
component. \\[.3em]
All functions in dimension $n$ with the maximal number of bent components contain a large variety of
vectorial bent functions from dimension $n$ into dimension $m$. The obvious ones in the functions in this
article are vectorial Maiorana-McFarland functions, but there may also be other ones, see Question 1 
in \cite{ppmb}. \\[.3em] 
{\it Question 2:}
Which vectorial bent functions from dimension $n$ to dimension $n/2 = m$ permit a nontrivial extension
to a vectorial function in dimension $n$ with $2^n-2^m$ bent components? Most interesting are extensions
without any affine component functions, or even with largest possible nonlinearity. How many such extensions 
can exist for a given vectorial bent function? \\[.3em]
{\it Question 3:}
How many such vectorial bent functions can two (of our) functions on $\F_{2^n}$ with the maximal number
of bent components, and possibly both with maximal nonlinearity, have in common? \\[.3em]
Several more, interesting questions on functions with the maximal number of bent components can be found
in \cite{ppmb}. We remark that Question 4 and parts of Question 8 are solved in \cite{mztz}, where it is
shown that having the maximal number of bent components is a CCZ-invariant, and that a plateaued function with the
maximal number of bent components cannot be APN. Parts of Question 5 we solved in Section \ref{sec2}.
In these connections we ask a further question. \\[.3em]
{\it Question 4:}
Is there a function with the maximal number of bent components which is not CCZ-equivalent to a plateaued 
function? \\[.3em]
Note that with a negative answer to Question 4, Theorem \ref{nobo} would completely solve the question on 
the maximal nonlinearity for functions with the maximal number of bent components.

We finish the article with some remarks on Question 3 in \cite{ppmb}, which strengthens the assumption of the 
authors of \cite{ppmb}, that $x^{2^m+1}$ is the only monomial with the maximal number of bent components.
Note that this is confirmed for all $n\leq 20$, since all monomial bent functions in dimension $n \le 20$ are known.
\begin{fact}
\label{onlymo}
Let $F(x)=x^{d}$ be a monomial function having the maximum number bent components. Then $d=t(2^m+1)$ 
for some integer $t$ relatively prime to $2^m-1$.
\end{fact}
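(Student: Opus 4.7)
The plan is to combine the multiplicative symmetry of the monomial $F(x)=x^d$ with a single Walsh-value computation at $u=0$. For any $c\in\fn^*$ the substitution $x\mapsto cy$ yields
\[
\mathcal{W}_{F_{\gamma c^d}}(uc)=\mathcal{W}_{F_\gamma}(u),
\]
so the bent/non-bent status of $F_\gamma$ is invariant under the action of the subgroup $G=\{c^d:c\in\fn^*\}$ of $\fn^*$, which has order $(2^n-1)/g$ with $g=\gcd(d,2^n-1)$. Since $\fn^*$ acts freely on itself, every $G$-orbit has size $|G|$, and the nonzero part of the subspace $\mathcal{C}$ in $(\ref{setC})$ is a union of such orbits. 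From $|\mathcal{C}\setminus\{0\}|=2^m-1$ I obtain $|G|\mid 2^m-1$, and together with $2^n-1=(2^m+1)(2^m-1)$ and $\gcd(2^m+1,2^m-1)=1$ this is equivalent to $(2^m+1)\mid g$. Hence $(2^m+1)\mid d$ modulo $2^n-1$, and I may assume $d=t(2^m+1)$ with $1\le t\le 2^m-2$, giving $F(x)=N(x)^t$ for the norm map $N(x)=x^{2^m+1}:\fn\to\F_{2^m}$.

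To force $\gcd(t,2^m-1)=1$ I would inspect a single component index. Since $\Tnm(\gamma)=0$ for $\gamma\in\F_{2^m}$, each such $F_\gamma$ vanishes identically and is non-bent, so $\mathcal{C}=\F_{2^m}$ and every $\gamma\not\in\F_{2^m}$ must give a bent component. Fix such a $\gamma$ and set $\beta=\Tnm(\gamma)\in\F_{2^m}^*$; then $F_\gamma(x)=\T_m(\beta N(x)^t)$. Writing $s=\gcd(t,2^m-1)$ and using that $N:\fn^*\to\F_{2^m}^*$ is $(2^m+1)$-to-$1$ while $y\mapsto y^t$ is $s$-to-$1$ onto the subgroup $H=\{y^t:y\in\F_{2^m}^*\}$ of order $(2^m-1)/s$, I partition $\fn$ according to the value of $N(x)^t$ to arrive at
\[
\mathcal{W}_{F_\gamma}(0)=1+s(2^m+1)\sum_{z\in H}(-1)^{\T_m(\beta z)}.
\]

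The bent condition $\mathcal{W}_{F_\gamma}(0)\in\{\pm 2^m\}$ then reads $s(2^m+1)S=\pm 2^m-1$ for the integer $S=\sum_{z\in H}(-1)^{\T_m(\beta z)}$. The choice $+2^m-1$ is impossible because $s(2^m+1)\ge 2^m+1>2^m-1>0$ would force $S=0$, giving the contradiction $0=2^m-1$; the choice $-2^m-1$ yields $S=-1/s$, which is an integer only when $s=1$. Thus $\gcd(t,2^m-1)=1$, as required. The main obstacle, in my view, is recognising that the single Walsh value $\mathcal{W}_{F_\gamma}(0)$ already pins down the gcd condition; once this is spotted, the divisibility and integrality calculations are routine.
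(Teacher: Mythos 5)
Your proposal is correct and follows essentially the same route as the paper: first the multiplicative invariance of bentness under $\gamma\mapsto\gamma c^d$ forces $(2^m+1)\mid\gcd(d,2^n-1)$ (you count the non-bent orbit union of size $2^m-1$ where the paper counts the bent one of size $2^m(2^m-1)$, which is only a cosmetic difference), and then the evaluation of $\mathcal{W}_{F_\gamma}(0)$ through the $(2^m+1)$-to-$1$ norm map and the $s$-to-$1$ power map yields $s(2^m+1)S=\pm 2^m-1$ and hence $s=1$, exactly the paper's computation at $u=0$. No gaps.
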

\begin{proof}
The component function $\mathrm{Tr}_n(\alpha F(x))$ is bent if and only if $\mathrm{Tr}_n(\alpha ((x+a)^d+x^d))$ is balanced for all non-zero $a\in \F_{2^n} $. This holds if and only if $\mathrm{Tr}_n(\alpha a^d ((x+1)^d+x^d))$ is balanced for all non-zero $a\in \F_{2^n} $. That is, $\mathrm{Tr}_n(\alpha F(x))$ is bent if and only if  $\mathrm{Tr}_n(\tilde{\alpha} F(x))$ is bent for all non-zero $\tilde{\alpha} \in \alpha \mathrm{Im}(x^d)$. Therefore, the number of bent components is divisible by $|\mathrm{Im}(x^d)\setminus \lbrace 0 \rbrace|=(2^n-1)/\gcd(2^n-1,d)$. As the maximum number of bent components is $2^m(2^m-1)$, for some positive integer $s$ we have
\begin{align*}
s\frac{2^n-1}{\gcd(2^n-1,d)}=2^m(2^m-1) ,
\end{align*} 
i.e., $\gcd(2^n-1,d)2^m=s(2^m+1)$. Therefore, $\gcd(2^n-1,d)$ is divisible by $2^m+1$. Write 
$d=t(2^m+1)$ for some positive integer $t$. Now we show that $t$ is relatively prime to $2^m-1$. Note that 
$\mathrm{Tr}_n(\alpha F(x))$ is the zero function for all $\alpha \in\F_{2^m}$, i.e., $\mathrm{Tr}_n(\alpha F(x))$ 
is bent for all $\alpha\in \F_{2^n}\setminus\F_{2^m}$. In particular, $W_F(\alpha,0)=\pm 2^m$. 
Set $\eta=\Tnm(\alpha)$, then we have the following equalities.
\begin{align} \label{eq:walsh}
W_F(\alpha,0)&= \sum_{x\in \F_{2^n}}(-1)^{\mathrm{Tr}_n(\alpha (x^{2^m+1})^{t})}
=1+(2^m+1)\sum_{y\in \F_{2^m}\setminus \lbrace 0 \rbrace}(-1)^{\mathrm{Tr}_m(\eta y^{t})} \nonumber \\
&=-2^m+ (2^m+1)\sum_{y\in \F_{2^m}}(-1)^{\mathrm{Tr}_m(\eta y^{t})}=\pm 2^m.
\end{align}
We conclude that $\sum_{y\in \F_{2^m}}(-1)^{\mathrm{Tr}_m(\eta y^{t})}$ is divisible by $2^m$. Then the fact that $-2^m \leq \sum_{y\in \F_{2^m}}(-1)^{\mathrm{Tr}_m(\eta y^{t})} \leq -2^m$ implies that $\sum_{y\in \F_{2^m}}(-1)^{\mathrm{Tr}_m(\eta y^{t})} \in \lbrace -2^m,0,2^m \rbrace $. By Equation \eqref{eq:walsh}, we conclude that 
$\sum_{y\in \F_{2^m}}(-1)^{\mathrm{Tr}_m(\eta y^{t})}=0$. Set $I=\mathrm{Im}(y^t)\setminus \lbrace 0 \rbrace $. Then 
$\sum_{y\in \F_{2^m}}(-1)^{\mathrm{Tr}_m(\eta y^{t})}=0$ implies that 
$\sum_{y\in \F_{2^m}\setminus \lbrace 0 \rbrace}(-1)^{\mathrm{Tr}_m(\eta y^{t})}=-1$, by which
\begin{align*}
\sum_{y\in \F_{2^m}\setminus \lbrace 0 \rbrace}(-1)^{\mathrm{Tr}_m(\eta y^{t})}=\gcd(t,2^m-1)
\sum_{z\in I}(-1)^{\mathrm{Tr}_m(\eta z^{t/\gcd(t,2^m-1)})}=-1 .
\end{align*}
Hence $\gcd(t,2^m-1)$ divides $1$, i.e., $\gcd(t,2^m-1)=1$.
\end{proof}

Note that for showing that $x^{2^m+1}$ is (up to equivalence) the only monomial with the maximal number
of bent components, we have to show that $t$ in Fact \ref{onlymo} must be a power of $2$. Hence it is sufficient 
to show that if $t$ is odd, then we must have $t=1$, which seems not to be an easy task.

\section*{Acknowledgement}

W.M. is supported by the FWF Project P 30966; L.M. is supported by the FWF Project P 31762. \\[.3em]
Wilfried Meidl wishes to thank Dr. Anbar for the hospitality during a research visit at Sabanc{\i} University,
21.09.2020 -- 06.10.2020.

\end{document}